\documentclass[12pt]{amsart}

\setlength\topmargin{0cm}
\setlength\textheight{21cm}
\setlength\textwidth{14.5cm}
\setlength\footskip{2cm}

\usepackage{amsthm,amsmath,mathrsfs}
\usepackage{amssymb,amsthm,amsmath}
\usepackage[dvips]{geometry}
\usepackage{amscd}
\usepackage[latin1]{inputenc}

\usepackage{colortbl}
\usepackage{color,graphics}

\bibliographystyle{plain}

\theoremstyle{plain}

\swapnumbers
\newtheorem{theorem}{Theorem}[section]
\newtheorem{proposition}[theorem]{Proposition}
\newtheorem{corollary}[theorem]{Corollary}
\newtheorem{lemma}[theorem]{Lemma}

\newtheorem{definition}[theorem]{Definition}

\newtheorem{remark}[theorem]{Remark}

\newcommand{\T}{\mathbb{T}}

\newcommand{\N}{\mathbb{N}}

\newcommand{\eps}{\varepsilon}

\newcommand{\SF}{{\mathcal F}}

\newcommand{\SX}{{\mathcal X}}


\newcommand{\diff}{\operatorname{Diff}}

\title{On $m$-minimal partially hyperbolic diffeomorphisms}

\author{Alexander Arbieto}
\email{arbieto@im.ufrj.br}
\address{Instituto de Matem\'atica, Universidade Federal do Rio de Janeiro.}
\author{Thiago Catalan}
\email{tcatalan@famat.ufu.br}
\address{Faculdade de Matem\'atica, Universidade Federal de Uberl\^andia.}

\author{Felipe Nobili}
\email{ felipe.nobili@gmail.com   }
\address{Universidade Federal Fluminense}

\date{\today}

\begin{document}

\maketitle

\begin{abstract} We discuss about the denseness of the strong stable and unstable manifolds of partially hyperbolic diffeomorphisms. In this sense, we introduce a concept of $m$-minimality. More precisely, we say that a partially hyperbolic diffeomorphisms is $m$-minimal if  $m$-almost every point in $M$ has its strong stable and unstable manifolds dense in $M$. We show that this property has dynamics consequences: topological and ergodic. Also, we prove the abundance of $m$-minimal partially hyperbolic diffeomorphisms in the volume preserving and symplectic scenario.

\end{abstract}

\section{Introduction}

The study of minimal foliations is a very important and classical one. By definition, all leaves are dense in the manifold for such foliations. There are a lot of examples, but perhaps the most 
simple is given by Kronecker curves, which are curves with irrational slope on the two-dimensional torus. 

Such foliations arose naturally in the theory of hyperbolic dynamics, since the invariant foliations of transitive Anosov systems are minimals. However, it turns out that the minimality brings stronger notions of transitivity. Indeed, they imply that the dynamics is topologically mixing.  Moreover, allied with some source of hyperbolicity, the minimality of the invariant foliation can 
produce robust transitivity, as Pujals and Sambarino did in \cite{PS}.

In the same spirit, the results in this paper concern about denseness of the strong stable and unstable foliations of partially hyperbolic diffeomorphisms, and the consequences of such denseness. In the following, we will define the objects and give the statements of our results.

By a foliation, we mean a partition of the manifold by submanifolds, which locally varies at least continuously (see \cite{CN}).

\begin{definition}
Let $\SF$ be a foliation and  $\SX=\{x\in M; \SF(x)$ is dense$\}$. We say that the foliation is transitive if $\SX$ is non empty. We say that $\SF$ is $\mathcal{R}$-minimal if $\SX$ is a residual subset of $M$. If $\mu$ is a Borelian probability measure then we say that $\SF$ is $\mu$-minimal if $\mu(\SX)=1$. Finally, the foliation is minimal if $\SX=M$. 
\end{definition}

There are some trivial relations between those notions. For instance, every minimal foliation is a $\mathcal{R}$-minimal and $\mu$-minimal foliation, and both implies that the foliation is transitive.   It is important to remark that a foliation is transitive if, and only if, it is $\mathcal{R}-$minimal, which is a directly consequence of the continuity of the foliation. It is easy to obtain $m$-minimal foliations (where $m$ is the Lebesgue measure) which are not minimal. Take any ergodic volume preserving Anosov diffeomorphism, and consider the orbit foliation of the suspension flow. However, it is a nice question to give sufficient conditions to  promote $m$-minimal foliations to minimal foliations. In particular, we are interested about this problem for invariant manifolds of partially hyperbolic diffeomorphisms, see below.

Now, we turn to the dynamics. Let $(M,g)$ be a compact, connected, boundaryless, Riemannian manifold. The Lebesgue measure is denoted by $m$. Any submanifold will be endowed with a metric, which is the restriction of $g$. 

Given a diffeomorphism $f$ on $M$ we say that a $Df$-invariant splitting $TM=E\oplus F$ is {\it dominated} if  there exists a positive integer $n$ such that
$$\|Df^n_x(u)\|\leq 1/2 \|Df_x^n(v)\| \textrm{  for every }x\in M \textrm{, and every } u\in E\textrm{ and } v\in F.$$ 
A diffeomorphism $f$ on $M$ is called  {\it partially hyperbolic} if there exists a continuous $Df$-invariant dominated  splitting $TM=E^s\oplus E^c\oplus E^u$, with non trivial extremal sub-bundles $E^s$ and $E^u$, and there exists $n\in \N$ such that $E^s$ and $E^u$ are uniformly contracted by $Df^n$ and $Df^{-n}$, respectively.

If the center bundle $E^c$ is trivial, then $f$ is called  {\it Anosov}. For convenience, given a partially hyperbolic diffeomorphism $f$, we consider its partially hyperbolic splitting $TM=E^s\oplus E^c\oplus E^u$, such that the extremal bundles contains all the $Df$-invariant sub-bundles of $TM$ which are contracted or expanded for some iterate of $Df$. In particular, for us, a partially hyperbolic diffeomorphism with non-trivial center bundle is not Anosov.  

By Theorem 6.1 of \cite{HPS} the strong bundles, $E^s$ and $E^u$, of a partially hyperbolic diffeomorphism $f$ is integrable. That is, there exist two strong foliations, the strong stable and strong unstable foliations, which are tangent to $E^s$ and $E^u$, respectively.  We denote these foliations by $\mathcal{F}^s$ and $ \mathcal{F}^u$, respectively.

We say that a partially hyperbolic diffeomorphism $f$ is {\it s-minimal (resp. u-minimal)} if its strong stable (resp. strong unstable) foliation $\mathcal{F}^s$ (resp. $\mathcal{F}^u$) is minimal. 

As we mentioned before, it is known that $s-$minimality and $u-$minimality implies topological properties of the dynamics. More precisely, a $s-$minimal or $u-$minimal, partially hyperbolic diffeomorphism $f$ is topologically mixing. In particular, $f$ is topologically transitive. Recall that a diffeomorphism $f$ is {\it topologically transitive} if there is a point whose forward orbit by $f$ is dense on $M$. Also, $f$ is {\it topologically mixing} if given open sets $U$ and $V$ of $M$, there exists a positive integer $n$ such that  $f^j(U)$ intersects $V$ for any $j\geq n$.

For instance, all Anosov transitive diffeomorphism are $s-$minimal and $u-$minimal. For non Anosov partially hyperbolic diffeomorphisms, it is proved by \cite{BDU} that robustly transitive partially hyperbolic diffeomorphisms with one dimensional center bundle is  either $s-$minimal or $u-$minimal, after a perturbation (See also \cite{No}). Recall that a diffemorphism is {\it robustly transitive} if every diffeomorphism sufficiently close to it, in the $C^1$-topology is transitive. 

However, for partially hyperbolic diffeomorphisms with higher center dimension nothing is known about the minimality of the strong invariant foliations.  Even so, we can say something for those weak notions of minimality.

A direct consequence of the famous  Hayashi's Connecting Lemma \cite{H}, generically any partially hyperbolic diffemorphism $f$ is such that its strong foliations $\mathcal{F}^s$ and $\mathcal{F}^u$ are $\mathcal{R}-$minimal, see also \cite{Shi}. However, we do not know an answer to the following problem: if the strong foliation is $\mathcal{R}-$minimal, is it true that $f$ is topologically transitive?

We turn our attention to explore the $m$-minimality of the strong invariant foliations.

We will denote by $\mathcal{X}^s(f)$ the set of points $x\in M$ such that $\SF^s(x)$ is 
dense. Analogously, we define $\mathcal{X}^u(f)$ using  $\SF^u$. 

\begin{definition}
Let $f$ be a partially hyperbolic diffeomorphism. We say that $f$ is {\it $ms$-minimal} if $m(\mathcal{X}^s(f))=1$. We say that $f$ is $mu$-minimal if $m(\mathcal{X}^u(f))=1$. Finally, $f$ is $m$-minimal if is both $ms$ and $mu$-minimal.
\end{definition}

In Section \ref{sectionmminimality} we will prove many  basic properties  satisfied by  $ms$ and $mu$-minimal diffeomorphisms. In particular, we prove that  $m$-minimality is a $G_{\delta}$ property in the volume preserve scenario. 

In Section 3, we prove the following  two main consequences of $m$-minimallity. These give information about the complexity of the dynamics at the topologic and ergodic level. Recall, that a diffeomorphism is \emph{weakly ergodic} if the orbit of $m$-almost every point $x$ is dense on $M$.  

\begin{theorem}
Let $f$ be a $C^1$-partially hyperbolic diffeomorphism preserving the Lebesgue measure $m$. If $f$ is  $ms$-minimal or $mu$-minimal then  $f$ is topologically mixing. Moreover, if $f$ is also $C^{1+\alpha}$ then $f$ is weakly ergodic.
 \end{theorem}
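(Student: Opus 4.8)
The plan is to reduce everything to the unstable case and to isolate one structural lemma. Since $f$ preserves $m$, so does $f^{-1}$; moreover $\SF^u$ of $f^{-1}$ equals $\SF^s$ of $f$ and $\mathcal{X}^u(f^{-1})=\mathcal{X}^s(f)$, so $f$ is $ms$-minimal iff $f^{-1}$ is $mu$-minimal, while both topological mixing and density of orbits are invariant under $f\mapsto f^{-1}$. Hence I may assume throughout that $f$ is $mu$-minimal. The key observation is the following saturation lemma: \emph{if $O$ is a nonempty open set with $f^k(O)=O$ for some $k\ge 1$, then $O$ is dense.} Indeed, writing $g=f^k$ we have $g^{-n}(O)=O$ for all $n$; if $x\in O$ and $y\in\SF^u(x)$ then $d(g^{-n}x,g^{-n}y)\to 0$ because $E^u$ is uniformly expanded, so $g^{-n}y\in O$ for large $n$ and therefore $y=g^n(g^{-n}y)\in O$. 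Thus $O$ is $\SF^u$-saturated; choosing $x\in O\cap\mathcal{X}^u$ (possible since $m(O)>0$ and $m(\mathcal{X}^u)=1$) the dense leaf $\SF^u(x)$ lies in $O$, so $\overline{O}=M$. This is exactly where partial hyperbolicity, through the expansion of $E^u$, enters.

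Transitivity comes next, and here is the only place I use invariance of $m$. If $f$ were not transitive there would be nonempty open sets $U,V$ with $f^n(U)\cap V=\emptyset$ for all $n\ge1$; set $W=\bigcup_{n\ge1}f^n(U)$, an open forward invariant set ($f(W)\subseteq W$) with $W\cap V=\emptyset$ and $m(W)>0$. The sets $f^n(W)$ decrease, so $\widehat{W}=\bigcap_{n\ge0}f^n(W)$ satisfies $f(\widehat{W})=\widehat{W}$ and, by invariance of $m$, $m(\widehat{W})=m(W)>0$. As above one checks that $\widehat{W}$ is $\SF^u$-saturated (if $f^{-n}x\in W$ for all $n$ and $y\in\SF^u(x)$, then $f^{-n}y\in W$ for large $n$, hence $y\in f^n(W)$ for every $n\ge 0$ by monotonicity), so it contains a dense leaf and $W\supseteq\widehat{W}$ is dense, contradicting $W\cap V=\emptyset$. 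To upgrade transitivity to mixing I invoke the classical cyclic (spectral) decomposition of a transitive homeomorphism: there are $k\ge1$ and pairwise disjoint nonempty open sets $O_0,\dots,O_{k-1}$ with $f(O_i)=O_{i+1\bmod k}$, $f^k(O_i)=O_i$, and dense union. The saturation lemma forces each $O_i$ to be dense; but two disjoint dense open sets are impossible, so $k=1$ and $f$ is topologically mixing. Note that mere total transitivity would not suffice, as an irrational rotation shows, and it is precisely the expansion used in the saturation lemma that excludes such periodic obstructions.

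For weak ergodicity I assume in addition $f\in C^{1+\alpha}$ and prove that $m$-a.e.\ point has dense backward orbit (the $ms$-case and the two-sided statement then follow by the same $f\leftrightarrow f^{-1}$ symmetry). Let $R^-$ be the set of backward recurrent points, so $m(R^-)=1$ by Poincar\'e recurrence. The essential use of $C^{1+\alpha}$ is that $\SF^u$ is then absolutely continuous, so $m$ disintegrates along unstable leaves with conditionals equivalent to the leaf volume; consequently, for $m$-a.e.\ $x$ the set $R^-$ meets $\SF^u(x)$ in a subset of full leaf-measure. Intersecting this full-measure set with $\mathcal{X}^u$, fix such an $x$ with $\SF^u(x)$ dense and leaf-a.e.\ point backward recurrent. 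Given any open $V$, the piece $\SF^u(x)\cap V$ has positive leaf-measure, hence contains a backward recurrent point $w$; then $f^{-n_j}w\to w\in V$ along some $n_j\to\infty$, and since $w\in\SF^u(x)$ gives $d(f^{-n}x,f^{-n}w)\to 0$, we obtain $f^{-n_j}x\in V$ for large $j$. Letting $V$ range over a countable base shows the backward orbit of $x$ is dense, so $m$-a.e.\ orbit is dense and $f$ is weakly ergodic.

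The step I expect to be most delicate is the absolute-continuity transfer in the last paragraph: passing from full $m$-measure recurrence to leaf-a.e.\ recurrence on a.e.\ leaf is exactly what fails in the merely $C^1$ category, and is the reason weak ergodicity is stated only under $C^{1+\alpha}$. For the mixing part the delicate point is ruling out periodicity (the reduction to $k=1$), which I handle by feeding the expansion-based saturation lemma into the cyclic decomposition.
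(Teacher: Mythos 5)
Your weak-ergodicity argument is essentially sound, and it is in fact the paper's own first proof transposed from stable to unstable leaves: Poincar\'e recurrence, plus absolute continuity of the strong foliation (the only place $C^{1+\alpha}$ is used), plus $m$-a.e.\ dense leaves. The problems are in the mixing half, where there are two gaps. The first is in your saturation lemma: from $g^{-n}x\in O$ and $d(g^{-n}x,g^{-n}y)\to 0$ you cannot conclude $g^{-n}y\in O$ for large $n$, because an open set is not uniformly open and the orbit $g^{-n}x$ may shadow $\partial O$. Indeed the saturation claim itself is false: take a linear Anosov map of $\T^2$ (which is $mu$-minimal) and $O=\T^2\setminus\{p\}$ with $p$ the fixed point; $O$ is open and invariant, but for $x\in \SF^u(p)\setminus\{p\}$ the leaf $\SF^u(x)=\SF^u(p)$ contains $p\notin O$. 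The conclusion of the lemma (density of $O$) can be rescued by choosing $x\in O\cap\mathcal{X}^u$ \emph{backward recurrent}, so that $g^{-n_j}x\to x$ eventually lies in a fixed ball inside $O$; but that fix runs through Poincar\'e recurrence, i.e.\ it uses invariance of $m$ --- contrary to your claim that invariance enters only in the transitivity step. With this repair your transitivity argument does go through.

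The second gap is fatal as written: the ``classical cyclic (spectral) decomposition of a transitive homeomorphism'' you invoke does not exist in the generality you need. What your argument requires is that for a transitive homeomorphism, absence of a decomposition with $k\geq 2$ (equivalently, density of every open set invariant under some $f^k$) forces topological mixing, and this is false: an irrational rotation is transitive and every $R^k$-invariant open set is dense (since $R^k$ is minimal), yet it is not mixing; minimal weakly mixing but non-mixing systems such as the Chac\'on subshift give counterexamples even among ``chaotic'' dynamics. Your own caveat about rotations pinpoints the issue: the conclusion of the saturation lemma holds for the rotation as well, so it cannot be the property that excludes non-mixing behavior, and the place where expansion must be used to upgrade ``infinitely many good times'' to ``all large times'' is exactly what is missing. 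The paper's proof is genuinely different and quantitative: given $U,V$, it fixes a small ball $B\subset U$ of measure $b$ whose points carry unstable $\eps$-disks inside $U$, produces (from $mu$-minimality) an open set $W$ with $m(W)\geq 1-b$ whose points have $\delta$-dense unstable $K$-disks, takes $N_0$ from uniform expansion so that $f^n(\SF^{uu}_{\eps}(x))\supset \SF^{uu}_K(f^n(x))$ for all $n\geq N_0$, and then uses $m(f^{-n}(W))=m(W)$ to find, for \emph{every single} $n\geq N_0$, a point of $B\cap f^{-n}(W)$; this yields $f^n(U)\cap V\neq\emptyset$ for all $n\geq N_0$. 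Some argument of this uniform type is needed; the soft topological route you chose cannot produce mixing.
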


In Section 3, we also  introduce the SH property of Pujals and Sambarino, \cite{PS}, and we use it to obtain a kind of robustness of the $m$-minimality, see Proposition \ref{Shprop}.

A natural question is if the converse of the implications above holds. In Section 4, we present some examples which deals with those questions.

Our next task is to show the abundance of $ms$ and/or $mu$-minimal diffeomorphisms in two natural scenarios: volume preserving diffeomorphisms and symplectic diffeomorphisms.

We start with the volume preserving scenario. We denote by $\diff^1_m(M)$ the set formed by diffeomorphisms $f$ on $M$ that preserve the volume form $m$, i.e., $f^*m=m$. 

We say that a diffemorphism $f$ exhibit a homoclinic tangency if there exists a non empty and non transversal intersection between the stable and unstable manifold of a hyperbolic periodic point of $f$. Hence, denoting by $\mathcal{HT}$ the subset of $C^1$ diffeomorphisms exhibiting a homoclincic tangency, we  know there exists an open and dense subset in $ \diff^1_m(M)\setminus cl(\mathcal{(HT)})$ formed by partially hyperbolic diffeomorphisms. This was proved by Crovisier, Sambarino and Yang in \cite{CSY}. See also \cite{ACS}. Moreover, such diffemorphisms were such that the center bundle admits a sub splitting in one dimensional sub bundles. 

Our next result says that in the volume preserving scenario, far from homoclinic tangency, the presence of $m$-minimality is abundant.

\begin{theorem} There exists an open and dense subset $\mathcal{G}\subset \diff^1_m(M)\setminus cl\mathcal{(HT)}$, such that any $C^2-$diffeomorphism $f\in \mathcal{G}$ is a partially hyperbolic diffeomorphism which is $m$-minimal. \label{mmc2}\end{theorem}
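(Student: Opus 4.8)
The plan is to realize $\mathcal{G}$ as a finite intersection of $C^1$-open and dense sets defined by \emph{robust} dynamical conditions, and to draw the measure-theoretic conclusion only for its $C^2$ members, where the strong foliations are absolutely continuous. First, by the theorem of Crovisier, Sambarino and Yang \cite{CSY} there is a $C^1$-open and dense set $\mathcal{D}_1\subset \diff^1_m(M)\setminus cl(\mathcal{HT})$ consisting of partially hyperbolic diffeomorphisms whose center bundle $E^c$ carries a dominated splitting into one-dimensional sub-bundles; this already secures, robustly, the partial hyperbolicity asserted in the statement. Second, I would intersect $\mathcal{D}_1$ with the set of accessible diffeomorphisms, which by the density of accessibility in the conservative partially hyperbolic category (Dolgopyat--Wilkinson, and Rodriguez Hertz--Rodriguez Hertz--Ures in the one-dimensional center case) is again $C^1$-open and dense. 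The resulting open and dense set is our $\mathcal{G}$.

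It remains to show that every $C^2$ diffeomorphism $f\in\mathcal{G}$ is $m$-minimal. The one-dimensional sub-splitting of $E^c$ supplied by \cite{CSY} makes the center-bunching inequality automatic, so by the Burns--Wilkinson criterion a $C^2$, volume-preserving, accessible such $f$ is ergodic (indeed Kolmogorov) for $m$. Next observe that $\mathcal{X}^s(f)$ is Borel: using a countable basis of $M$ together with the continuity of $\mathcal{F}^s$, the predicate ``$\mathcal{F}^s(x)$ is dense in $M$'' is a countable combination of open conditions. Moreover $\mathcal{X}^s(f)$ is $\mathcal{F}^s$-saturated and $f$-invariant, since $f$ is a homeomorphism carrying $\mathcal{F}^s(x)$ onto $\mathcal{F}^s(f(x))$. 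Ergodicity then forces the dichotomy $m(\mathcal{X}^s(f))\in\{0,1\}$, so the whole problem reduces to excluding the value $0$; the same reasoning applies verbatim to $\mathcal{X}^u(f)$, and together they give $m$-minimality.

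To produce positive measure I would first fix a seed leaf. A direct application of Hayashi's Connecting Lemma \cite{H} in the conservative category---exactly as invoked in the Introduction to obtain $\mathcal{R}$-minimality---shows that $\mathcal{X}^s(f)$ is residual, hence there is a point $x_0$ with $\mathcal{F}^s(x_0)$ dense in $M$. The heart of the matter is to upgrade this \emph{single} dense leaf to a \emph{positive-measure family} of dense leaves. For this I would slide $\mathcal{F}^s(x_0)$ along the holonomy of the strong unstable foliation $\mathcal{F}^u$ over a small unstable disk $D^u$ through $x_0$: iterating by $f^{-n}$ and using that $E^u$ is uniformly contracted by $Df^{-n}$, the holonomy images stay $C^0$-close to $\mathcal{F}^s(x_0)$ on arbitrarily large compact scales, so that topological density should be transmitted to the stable leaf $\mathcal{F}^s(y)$ for a holonomy-large set of $y\in D^u$. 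Since $f$ is $C^{1+\alpha}$, the foliation $\mathcal{F}^u$ is absolutely continuous, whence the saturation of such a set has positive $m$-measure; this gives $m(\mathcal{X}^s(f))>0$ and, by the dichotomy above, $m(\mathcal{X}^s(f))=1$. The symmetric argument with $\mathcal{F}^s$ and $Df^n$ yields $m(\mathcal{X}^u(f))=1$, so $f$ is $m$-minimal.

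The main obstacle is precisely this propagation step. Unlike $\mathcal{R}$-minimality, which follows softly from the Connecting Lemma and Baire category, positive Lebesgue measure of the set of dense leaves cannot be extracted from residuality alone (a residual saturated set may be null), and must be forced geometrically. I expect the delicate point to be controlling the unstable holonomy well enough that genuine density of $\mathcal{F}^s(x_0)$ passes to nearby stable leaves over a positive-measure transversal set, and in particular to show that accessibility---rather than full minimality of $\mathcal{F}^s$, which is unknown for higher-dimensional center---suffices for this transfer. A secondary, more technical, obstacle is to confirm rigorously the center-bunching hypothesis of the ergodicity criterion from the one-dimensional sub-splitting of $E^c$, so that the $0$--$1$ law is available for $\mathcal{X}^s(f)$ and $\mathcal{X}^u(f)$.
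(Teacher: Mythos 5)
Your high-level skeleton (partial hyperbolicity from \cite{CSY} on an open and dense set, robust accessibility, then an ergodic upgrade for $C^2$ members) matches the paper's, but the two steps that carry the real content both have genuine gaps, and the paper's proof is built precisely to avoid them. The first gap is the seed leaf. Hayashi's Connecting Lemma is a perturbation result: it produces a dense strong stable leaf only after modifying the diffeomorphism, i.e.\ for a \emph{residual} subset of $\diff^1_m(M)$, not for your fixed $C^2$ diffeomorphism $f\in\mathcal{G}$. You cannot conclude that $\mathcal{X}^s(f)$ is residual (or even nonempty) for every $C^2$ element of an open and dense set this way; a residual subset of $\diff^1_m(M)$ need not even meet the $C^2$ diffeomorphisms you care about, and density of the leaf of a non-periodic point is not a robust property, so it cannot be promoted to an open set of diffeomorphisms. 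The paper's substitute is a robust mechanism anchored at periodic points: by the conservative version of \cite{CSY} and \cite{ABCDW} (Theorem \ref{p.CSY}) there are hyperbolic periodic points $p_0,\dots,p_k$ of every index between $\dim E^s$ and $\dim(E^s\oplus E^c)$; conservative blenders (Theorem \ref{rrtu}, from \cite{RHRHTU}), activated via the connecting lemma and chained by Lemma \ref{l.livro}, give an open set $\mathcal{V}$ on which $cl(W^s(p_i(g)))\supset W^s(p_{i+1}(g))$ for all $i$ and all $g\in\mathcal{V}$; and for every $C^2$ $g\in\mathcal{V}$, weak ergodicity (hence transitivity) produces a dense backward orbit whose strong unstable leaves cross $W^s_{loc}(p_k(g))$, forcing $W^s(p_k(g))$, and then $\mathcal{F}^s(p_0(g))=W^s(p_0(g))$, to be dense. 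That is how density of a single leaf is made to hold for \emph{all} $C^2$ elements of an open set, which is what the statement requires.

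The second gap is the propagation step you yourself flag as the main obstacle, and it does fail as stated. To pass $\delta$-density from $\mathcal{F}^s(x_0)$ to $\mathcal{F}^s(y)$ by backward iteration you must compare compact pieces of the leaves of $f^{-n}(x_0)$ and $f^{-n}(y)$ at scales $K_n$ that grow exponentially (the $\delta$-dense compact part of $\mathcal{F}^s(x_0)$ expands under $f^{-n}$), while continuity of the foliation on compact parts comes with no uniform modulus; the exponential closeness $d(f^{-n}(y),f^{-n}(x_0))\lesssim \lambda^n$ therefore cannot be played against it, and absolute continuity of $\mathcal{F}^u$ does not repair this. The paper's Proposition \ref{criterion.density} resolves exactly this difficulty by using a \emph{periodic} point: periodicity makes the scale $K$ and the return neighborhood $V$ uniform (finitely many leaves $\mathcal{F}^s(f^j(p))$, plus a locally $f^{-\tau(p)}$-invariant disc tangent to $E^c\oplus E^u$, which exists automatically when $\operatorname{ind} p=\dim E^s$), and yields that \emph{every} $x$ with $p\in\omega(x)$ has dense strong stable leaf (Proposition \ref{PPP1}). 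Combined with weak ergodicity ($m$-a.e.\ orbit is dense, so $p\in\omega(x)$ for a.e.\ $x$), this gives $m(\mathcal{X}^s(f))=1$ outright (Theorem \ref{mmawe}), with no $0$--$1$ law. This also removes your third gap: your dichotomy needs genuine ergodicity, and Burns--Wilkinson requires center bunching, which is automatic for one-dimensional center but \emph{not} for a higher-dimensional center that merely admits a dominated splitting into one-dimensional sub-bundles; weak ergodicity, by contrast, follows from accessibility for $C^{1+\alpha}$ conservative partially hyperbolic diffeomorphisms (\cite{DW}, \cite{ACW}, Remark \ref{argumentoACW}) with no bunching hypothesis, and is all the paper ever uses.
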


Now, since $m$-minimality is a $G_{\delta}$ property, Proposition \ref{prop.5.1}, from the previous theorem we also have the following corollary. 

\begin{corollary} There exists a residual subset $\mathcal{R}\subset \diff^1_m(M)\setminus cl\mathcal{(HT)}$, such that any $f\in \mathcal{R}$ is a partially hyperbolic diffeomorphism which is $m$-minimal.    
\end{corollary}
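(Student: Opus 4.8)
The plan is to deduce the Corollary from Theorem~\ref{mmc2} together with the $G_{\delta}$ character of $m$-minimality recorded in Proposition~\ref{prop.5.1}, the guiding principle being that a dense $G_{\delta}$ subset of a Baire space is residual. Write $\mathcal{U}=\diff^1_m(M)\setminus cl\mathcal{(HT)}$; since $cl\mathcal{(HT)}$ is closed, $\mathcal{U}$ is an open subset of the complete metric space $\diff^1_m(M)$, and hence is itself a Baire space. Let $\SM\subset\mathcal{U}$ denote the set of partially hyperbolic diffeomorphisms in $\mathcal{U}$ that are $m$-minimal. By Proposition~\ref{prop.5.1} the property of being $m$-minimal is $G_{\delta}$, so $\SM$ is the trace on the open set $\mathcal{U}$ of a $G_{\delta}$ set, hence is $G_{\delta}$ in $\mathcal{U}$. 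It therefore suffices to prove that $\SM$ is dense in $\mathcal{U}$: once this is known, $\SM$ is a dense $G_{\delta}$ in the Baire space $\mathcal{U}$, hence residual, and the Corollary holds with $\mathcal{R}=\SM$.

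For the density, I would fix $f\in\mathcal{U}$ together with an arbitrary $C^1$-neighborhood $\SV\subset\mathcal{U}$ of $f$. By Theorem~\ref{mmc2} the subset $\SG$ is open and dense in $\mathcal{U}$, so $\SV\cap\SG$ is a nonempty open set. The one gap to be bridged is that Theorem~\ref{mmc2} guarantees $m$-minimality only for the $C^2$ members of $\SG$, whereas $\SV\cap\SG$ may a priori contain diffeomorphisms of class only $C^1$. This is handled by a smoothing step: since the $C^2$ (indeed $C^{\infty}$) volume-preserving diffeomorphisms are $C^1$-dense in $\diff^1_m(M)$, one may choose $g\in\SV\cap\SG$ of class $C^2$. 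By Theorem~\ref{mmc2} such a $g$ is partially hyperbolic and $m$-minimal, so $g\in\SM\cap\SV$. As $f$ and $\SV$ were arbitrary, this shows $\SM$ is dense in $\mathcal{U}$, completing the argument.

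The main obstacle is precisely this smoothing step, namely the $C^1$-density of $C^2$ volume-preserving diffeomorphisms inside the $C^1$ volume-preserving ones. Unlike the unconstrained case, approximation that respects the volume form is delicate; the relevant input is the volume-preserving smoothing theorem (due to Zehnder), which I would invoke rather than reprove. Everything else is formal: the intersection of the open dense set $\SG$ with the dense $G_{\delta}$ of $m$-minimal maps, carried out inside the Baire space $\mathcal{U}$. Finally, it is worth noting that every element of the resulting residual set $\mathcal{R}$ is automatically partially hyperbolic, since the notion of $m$-minimality is only defined for partially hyperbolic diffeomorphisms, for which the strong foliations $\SF^s$ and $\SF^u$ exist.
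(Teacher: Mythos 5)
Your proposal is correct and takes essentially the same approach the paper intends: deduce the corollary by combining Theorem \ref{mmc2} with the $G_{\delta}$ property of Proposition \ref{prop.5.1} through a Baire-category argument in the open (hence Baire) set $\diff^1_m(M)\setminus cl(\mathcal{HT})$. The paper compresses this into one sentence, and your explicit handling of the only nontrivial point --- the $C^1$-density of $C^2$ volume-preserving diffeomorphisms via Zehnder's smoothing theorem \cite{Ze}, which the paper itself invokes in the symplectic section --- correctly fills that implicit step.
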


We consider now, the symplectic scenario.

Let $(M,\omega)$ be a symplectic manifold, been $\omega$ the two symplectic form on $M$. In this case, $m$ will denote the volume form in $M$ induced by $\omega$. The set of symplectic diffemorphisms will be denoted by $\diff^1_{\omega}(M)$. Recall that $f$ is a symplectic diffeomorphism if $f^*\omega=\omega$. 

In this setting, the symplectic structure allow us to prove that generically any partially hyperbolic diffeomorphism is $m$-minimal. It is worth to point out that  in the symplectic setting all non Anosov diffeomorphisms  are approximated by diffeomorphisms exhibiting homoclinic tangency, see Newhouse \cite{N} .  

\begin{theorem} Let $(M^{2d},\omega)$ be a symplectic manifold, and consider $m=\omega^d$ a volume form in $M$. There exists a residual subset $\mathcal{R}\subset \diff^1_{\omega}(M)$, such that if $f\in \mathcal{R}$ is  a partially hyperbolic diffeomorphism then $f$ is $m$-minimal.  
\label{symp.sett.}\end{theorem}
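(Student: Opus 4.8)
The plan is to reduce the theorem to a \emph{density} statement and then let the $G_\delta$ property do the rest. First I would note that a symplectic diffeomorphism preserves $m=\omega^d$, so $\diff^1_{\omega}(M)$ embeds in the space of conservative diffeomorphisms and Proposition \ref{prop.5.1} applies: the $m$-minimal maps form a $G_{\delta}$ subset $\SM\subset\diff^1_{\omega}(M)$. Since $m$-minimality is only meaningful for partially hyperbolic maps, $\SM$ is contained in the $C^1$-open set $\SP$ of partially hyperbolic symplectic diffeomorphisms. It therefore suffices to prove that $\SM$ is \emph{dense} in $\SP$: then $\SM$ is residual in the open set $\SP$, so $\SP\setminus\SM$ is meager in $\diff^1_{\omega}(M)$, and $\SR:=\diff^1_{\omega}(M)\setminus(\SP\setminus\SM)$ is residual with $\SR\cap\SP=\SM$. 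This is exactly the conditional statement: if $f\in\SR$ is partially hyperbolic then $f\in\SM$, i.e.\ $f$ is $m$-minimal.

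To prove density, fix a partially hyperbolic symplectic $f$ and $\epsilon>0$. I would first perturb inside $\diff^1_{\omega}(M)$ to a $C^{\infty}$ symplectic $g$ (smooth symplectic maps are $C^1$-dense, and partial hyperbolicity is open, so $g$ stays in $\SP$). Then I would apply the symplectic refinement of Hayashi's Connecting Lemma \cite{H} (the conservative/symplectic version responsible for $\SR$-minimality) to perturb $g$, still symplectically and still partially hyperbolic, so that both strong foliations $\SF^s$ and $\SF^u$ are $\SR$-minimal; that is, $\mathcal{X}^s(g)$ and $\mathcal{X}^u(g)$ are residual, and in particular there exist a dense strong stable and a dense strong unstable leaf. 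Simultaneously, using the genericity of transitivity and accessibility in the symplectic category, I would arrange that $g$ enjoys the SH property of Pujals--Sambarino \cite{PS} on both extremal bundles. The point to stress is that, contrary to the volume-preserving Theorem \ref{mmc2}, one \emph{cannot} assume to be far from tangencies here, since by Newhouse \cite{N} every non-Anosov symplectic diffeomorphism is accumulated by tangencies; it is the symplectic Connecting Lemma and symplectic genericity that still deliver these dense leaves.

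The heart of the argument is upgrading ``a dense unstable leaf exists'' to ``$m$-almost every unstable leaf is dense''. Here I would use that $\mathcal{X}^u(g)$ is $\SF^u$-saturated and $g$-invariant, and feed in the SH property: SH forces local unstable plaques to expand to a definite size, and combined with the presence of one dense leaf and Poincar\'e recurrence it promotes the set of points with dense unstable leaf from residual to full $m$-measure, in the same spirit in which Pujals--Sambarino pass from a dense leaf to minimality (compare Proposition \ref{Shprop}). Since $g$ is $C^{1+\alpha}$, the foliation $\SF^u$ is absolutely continuous and $g$ is ergodic (accessible and center-bunched, via Burns--Wilkinson and stable ergodicity in the symplectic setting), so $m(\mathcal{X}^u(g))\in\{0,1\}$ by invariance, and the SH/expansion input rules out $0$; hence $g$ is $mu$-minimal. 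For $ms$-minimality I would invoke the symmetry $g\mapsto g^{-1}$: the inverse is again symplectic with $\SF^u_{g^{-1}}=\SF^s_{g}$ and $\mathcal{X}^s(g)=\mathcal{X}^u(g^{-1})$, so the same argument applied to $g^{-1}$ yields $ms$-minimality of $g$. Thus $g\in\SM$ is $\epsilon$-close to $f$, proving density.

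The main obstacle is precisely this last promotion from topological genericity to full $m$-measure: residual sets can be Lebesgue-null, and $\SF^{u}$-saturation plus invariance alone do not force positive measure, while in the bare $C^1$ category absolute continuity is unavailable. This is exactly why I would run the quantitative part of the argument only on the dense $C^{1+\alpha}$ family, where absolute continuity, ergodicity and the SH expansion estimates all hold, and then let the $G_{\delta}$ property of Proposition \ref{prop.5.1} transport $m$-minimality to the entire residual set $\SR$, including its merely $C^1$ members. The symplectic hypothesis enters decisively twice: in producing the dense strong leaves near tangencies through the symplectic Connecting Lemma, and through the $s\leftrightarrow u$ symmetry under $g\mapsto g^{-1}$, which lets a single argument cover both $ms$- and $mu$-minimality.
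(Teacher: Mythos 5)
Your global reduction (use Proposition \ref{prop.5.1} to make $m$-minimality a $G_\delta$ property inside the open set of partially hyperbolic symplectic diffeomorphisms, then prove denseness) is the same skeleton as the paper, and your use of Zehnder smoothing, the symplectic connecting lemma, and accessibility-based (weak) ergodicity also appears there. But the heart of your argument --- the promotion from ``one dense strong leaf'' to ``$m$-almost every strong leaf is dense'' --- rests on the SH property, and this is a genuine gap, in fact an impossibility. For a symplectic partially hyperbolic diffeomorphism the center bundle $E^c$ is a symplectic subbundle ($\omega|_{E^c}$ is nondegenerate) and the angles between the bundles are uniformly bounded, so $|\det(Df^n|_{E^c})|$ is bounded above and below uniformly in $n$; since the co-norm satisfies $m(A)^{\dim E^c}\leq |\det A|$, the quantity $m(Df^n|_{E^c})$ is uniformly bounded and can never satisfy the exponential lower bound $m\{Df^n|_{E^c}\}>C\lambda^n$, $\lambda>1$, required by SH. So SH cannot be ``arranged'' by symplectic perturbation on either extremal bundle; moreover, even where SH is available the paper explicitly states it does not know whether SH yields robustness of $m$-minimality (Proposition \ref{Shprop} is only a partial statement). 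Your secondary ingredient also fails: full ergodicity via Burns--Wilkinson requires center bunching, which is not available here; the paper only ever uses \emph{weak} ergodicity (accessibility plus the argument of Remark \ref{argumentoACW}), and weak ergodicity gives no $0$--$1$ law for invariant sets, so your ``$m(\mathcal{X}^u(g))\in\{0,1\}$, and SH rules out $0$'' step has no support.

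What the paper does instead, and what your proposal is missing, is a concrete geometric mechanism tailored to the symplectic setting: by Theorem \ref{t.trichotomy} (Catalan--Horita), generically a partially hyperbolic symplectic diffeomorphism with $2m$-dimensional center has an $m$-elliptic periodic point $p$; the connecting lemma (Proposition \ref{prop.density}) makes $\mathcal{F}^s(p)$ dense; Zehnder smoothing and the \emph{pasting lemma} then produce a $C^2$ map, linear near $p$, for which $(E^c\oplus E^u(p))\cap V$ is a locally backward-invariant disk tangent to $E^c\oplus E^u$ --- exactly hypothesis (b) of the criterion Proposition \ref{criterion.density}. Weak ergodicity puts $p$ in $\omega(x)$ for $m$-a.e.\ $x$, and the criterion transfers the $\delta$-density of $\mathcal{F}^s(p)$ to $\mathcal{F}^s(x)$ for a.e.\ $x$. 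Note also that this chain of perturbations only preserves $\delta$-density of compact leaf pieces, not full density, so one can only land in the open sets $\mathcal{B}^s(\eps,\delta)$ for fixed $(\eps,\delta)$; the residual set is then assembled by intersecting over $(\eps,\delta)=(1/m,1/n)$, rather than by exhibiting a single fully $m$-minimal perturbation as your plan requires. Without the elliptic point, the pasting-lemma disk, and this Baire assembly, your density step does not go through.
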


The paper is organized in the following way: In section 2, we give basic properties of $m$-minimality, and in section 3, we prove some dynamical consequences of $m$-minimality. In section 4, we give some examples about the relation  between the notions of minimality described in the introduction. Finally, in Section 5 we prove the abundance of m-minimal partially hyperbolic diffeomorphisms, i.e., we prove Theorems \ref{mmc2} and \ref{symp.sett.}.

{\bf Acknowledgements:} A.A. wants to thank FAMAT-UFU,  T.C. and F.N.  want to thank IM-UFRJ for the kind hospitality of these institutions during the preparation of this work. This work was partially supported by CNPq, CAPES, FAPERJ and FAPEMIG.

\section{Basic Properties of $m$-minimality}\label{sectionmminimality}

In this section we list some basic properties related to the $m$-minimality.

We first remark that if $\nu<<m$ and $f$ is $ms$-minimal then $f$ is $\nu s$-minimal.

We now examinate the invariance by iterations.

\begin{proposition}
Let $n>0$, $f$ is $m s$-minimal if, and only if, $f^n$ is $m s$-minimal. Let $n<0$ then $f$ is $m s$-minimal if, and only if, $f^n$ is $m u$-minimal.
\end{proposition}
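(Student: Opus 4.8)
The plan is to reduce the whole statement to elementary identities between the sets $\mathcal{X}^s$ and $\mathcal{X}^u$, exploiting that these sets depend only on the strong foliations regarded as partitions of $M$: by definition $x\in\mathcal{X}^s(f)$ precisely when the leaf $\mathcal{F}^s_f(x)$ is dense, a condition on the partition alone and not on the dynamics. Thus it suffices to show that replacing $f$ by $f^n$ either leaves the strong stable foliation unchanged (for $n>0$) or turns it into the strong unstable foliation of $f$ (for $n<0$).

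For $n>0$, I would first note that $f^n$ is again partially hyperbolic with the very same splitting $TM=E^s\oplus E^c\oplus E^u$. The leaf of $\mathcal{F}^s_f$ through $x$ is the strong stable manifold $W^s_f(x)$, i.e. the set of $y$ whose forward $f$-orbit approaches that of $x$ exponentially fast. Exponential convergence of $d(f^{nk}x,f^{nk}y)$ to zero is equivalent, through the uniform bounds on $Df$ over the finitely many intermediate iterates, to exponential convergence of $d(f^{k}x,f^{k}y)$; hence $W^s_{f^n}(x)=W^s_f(x)$ for every $x$, so $\mathcal{F}^s_{f^n}=\mathcal{F}^s_f$ as partitions and therefore $\mathcal{X}^s(f^n)=\mathcal{X}^s(f)$. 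Since $ms$-minimality asserts that this one set has full $m$-measure, the equivalence for $n>0$ follows at once, and it notably requires no assumption that $f$ preserve $m$.

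For $n<0$ I would route the argument through $f^{-1}$. A point $y$ lies in the strong stable manifold of $x$ for $f^{-1}$ exactly when the $f$-orbits of $x$ and $y$ converge in backward time, which is the defining property of the strong unstable manifold of $x$ for $f$; hence $\mathcal{F}^s_{f^{-1}}=\mathcal{F}^u_f$ and, symmetrically, $\mathcal{F}^u_{f^{-1}}=\mathcal{F}^s_f$. Writing $n=-|n|$ so that $f^n=(f^{-1})^{|n|}$ with $|n|>0$, the case already treated (applied to $f^{-1}$) gives $\mathcal{F}^u_{f^n}=\mathcal{F}^u_{f^{-1}}=\mathcal{F}^s_f$, whence $\mathcal{X}^u(f^n)=\mathcal{X}^s(f)$. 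Therefore $f^n$ is $mu$-minimal if and only if $m(\mathcal{X}^s(f))=1$, that is, if and only if $f$ is $ms$-minimal.

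This argument carries no analytic content beyond the elementary equivalence of exponential-convergence rates under a bounded number of intermediate iterates. The one step that genuinely deserves care is the preliminary verification that each iterate $f^n$ is partially hyperbolic with extremal bundles $E^s(f^n)=E^s(f)$ and $E^u(f^n)=E^u(f)$ for $n>0$ (and the two swapped for $n<0$), so that ``the strong stable foliation of $f^n$'' is defined at all and its leaves are the manifolds used above. This is routine: domination and uniform contraction/expansion transfer from $f$ to any of its iterates, and the extremal bundles---being the maximal invariant sub-bundles contracted or expanded by some power of the derivative---are unaffected by passing from $f$ to $f^n$. I therefore do not expect a real obstacle, the content of the proposition being essentially bookkeeping.
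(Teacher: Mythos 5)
Your proof is correct and follows exactly the route the paper takes: the paper's entire proof is the remark that $\mathcal{F}^s(x,f)=\mathcal{F}^s(x,f^n)$ for $n>0$ and $\mathcal{F}^s(x,f)=\mathcal{F}^u(x,f^n)$ for $n<0$, which is precisely the identity of foliations (hence of the sets $\mathcal{X}^s$, $\mathcal{X}^u$) that you establish. Your write-up merely supplies the routine verification (iterates remain partially hyperbolic with the same extremal bundles, and convergence along the subsequence $nk$ is equivalent to convergence along all iterates) that the paper leaves implicit.
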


\begin{proof}
We only need to check that $\SF^s(x,f)=\SF^s(x,f^n)$ if $n>0$ and $\SF^s(x,f)=\SF^u(x,f^n)$ if $n<0$.
\end{proof}

Now, we study the behaviour of the property under products.

\begin{proposition}
$f$ is $m s$-minimal if, and only if, $f\times f$ is $(m\times m) s$-minimal.
\end{proposition}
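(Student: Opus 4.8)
The plan is to reduce the statement to the purely topological fact that a product of two sets is dense in $M\times M$ exactly when both factors are dense, together with Fubini for the product measure $m\times m$. The only geometric input needed is the identification of the strong stable foliation of $f\times f$ with the product of the strong stable foliations of $f$.

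First I would check that $f\times f$ is partially hyperbolic and that $\SF^s_{f\times f}(x,y)=\SF^s_f(x)\times\SF^s_f(y)$. Since $D(f\times f)_{(x,y)}=Df_x\oplus Df_y$ preserves the decomposition $T_xM\oplus T_yM$, the natural candidate splitting is $E^s(x)\oplus E^s(y)$, $E^c(x)\oplus E^c(y)$, $E^u(x)\oplus E^u(y)$. Because $\|D(f\times f)^n|_{E^s\oplus E^s}\|=\max\{\|Df^n_x|_{E^s}\|,\|Df^n_y|_{E^s}\|\}$, the bundle $E^s\oplus E^s$ is uniformly contracted by the same iterate that contracts $E^s$; symmetrically $E^u\oplus E^u$ is uniformly expanded. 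Granting that this splitting is dominated (see the last paragraph), the extremal product bundle $E^s\oplus E^s$ is exactly the strong stable bundle of $f\times f$, and by uniqueness of the strong stable manifold tangent to it (Theorem 6.1 of \cite{HPS}) the leaf through $(x,y)$ is the product $\SF^s_f(x)\times\SF^s_f(y)$.

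Next, since $\ov{A\times B}=\ov A\times\ov B$ in $M\times M$, the leaf $\SF^s_f(x)\times\SF^s_f(y)$ is dense in $M\times M$ if and only if both $\SF^s_f(x)$ and $\SF^s_f(y)$ are dense in $M$. This yields the exact set identity $\mathcal{X}^s(f\times f)=\mathcal{X}^s(f)\times\mathcal{X}^s(f)$. The set $\mathcal{X}^s(f)$ is Borel: taking a countable base $\{B_i\}$ for $M$, one has $\mathcal{X}^s(f)=\bigcap_i\{x:\SF^s_f(x)\cap B_i\neq\es\}$, where each set in the intersection is the $\SF^s$-saturation of an open set, hence open by continuity of the foliation (the same remark already used for $\mathcal R$-minimality). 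Thus Fubini gives $(m\times m)(\mathcal{X}^s(f\times f))=m(\mathcal{X}^s(f))^2$. As $m$ is a probability measure, $m(\mathcal{X}^s(f))^2=1$ if and only if $m(\mathcal{X}^s(f))=1$; reading this equivalence in both directions is exactly the assertion that $f$ is $ms$-minimal iff $f\times f$ is $(m\times m)s$-minimal.

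I expect the main obstacle to be the domination step hidden in the first paragraph. Although uniform contraction of $E^s\oplus E^s$ and expansion of $E^u\oplus E^u$ are automatic, domination of the product splitting compares the norm $\|Df^N_x|_{E^s}\|$ with the weakest contraction or expansion of $Df^N$ along $E^c$ at a \emph{different} base point $y$, so one must ensure the uniform domination gap of $f$ survives this cross-comparison between distinct points (for instance by working in an adapted metric and passing to a suitable iterate). Once $f\times f$ is known to be partially hyperbolic, the identification of its strong stable foliation with the product foliation and the measure-theoretic conclusion are routine.
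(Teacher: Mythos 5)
Your endgame --- the identity $\mathcal{X}^s(f\times f)=\mathcal{X}^s(f)\times\mathcal{X}^s(f)$, the Borel measurability of $\mathcal{X}^s(f)$, and Fubini --- is correct, and is in fact more careful than the paper, which derives the same set identity and then simply says ``the result follows.'' The genuine gap is exactly at the step you flagged and deferred: making $f\times f$ partially hyperbolic so that the uniqueness statement of \cite{HPS} identifies its strong stable foliation with the product foliation. With the paper's definition of domination, which compares $\|Df^n_x(u)\|$ with $\|Df^n_x(v)\|$ at the \emph{same} point $x$ (pointwise partial hyperbolicity), domination of the product splitting requires
\[
\sup_{x\in M}\|Df^n_x|_{E^s}\|\ \leq\ \tfrac{1}{2}\,\inf_{y\in M}\,\big(\text{co-norm of } Df^n_y|_{E^c}\big),
\]
an \emph{absolute} gap between rates at different points, which is strictly stronger than what partial hyperbolicity of $f$ provides. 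Your proposed remedy --- an adapted metric and a large iterate --- cannot produce this: those devices absorb multiplicative constants, not rate gaps. Concretely, suppose $f$ has a fixed point $q$ whose center co-norm per iterate is $0.1$ and whose strong stable norm is $0.01$, and another fixed point $r$ with center co-norm $0.9$ and strong stable norm $0.4$; pointwise domination holds at both points, but at the fixed point $(r,q)$ of $f\times f$ the direction $E^s_r\subset E^s\oplus E^s$ contracts at rate $0.4$ while the direction $E^c_q\subset E^c\oplus E^c$ contracts at rate $0.1$, and no iterate reverses this inequality. So the product splitting is not dominated, and $f\times f$ need not be partially hyperbolic at all.

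The paper takes a different route that sidesteps this entirely: it identifies $\SF^s((x,y),f\times f)$ with $\SF^s(x,f)\times\SF^s(y,f)$ through the asymptotic characterization of strong stable leaves (a pair lies on the product leaf if and only if each coordinate converges to the corresponding orbit under forward iteration), never invoking a dominated splitting for $f\times f$. This is also the correct way to repair your first paragraph: the bundle $E^s\oplus E^s$ is uniformly contracted and is integrated by the product foliation $\SF^s\times\SF^s$ whether or not it sits inside a dominated splitting, and this product foliation is what ``the strong stable foliation of $f\times f$'' must mean for the proposition to be well posed. Once the leaf identification is obtained this way, your closure identity $\ov{A\times B}=\ov{A}\times\ov{B}$ and the Fubini step finish the proof exactly as you wrote them, and they constitute the precise measure-theoretic argument the paper leaves implicit.
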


\begin{proof}
We begin noticing that if $(a,b)\in \SF^s((x,y),f\times f)$, then 
$$d((f^n(x),f^n(y)),(f^n(a),f^n(b)))\to 0 \textrm{ as }n\to \infty.$$
So, this implies that $a\in \SF^s(x)$ and $b\in \SF^s(y)$. 

Now, we show that $\mathcal{X}^s(f\times f)\subset \mathcal{X}^s(f)\times \mathcal{X}^s(f)$. Indeed, let $(x,y)\in \mathcal{X}^s(f\times f)$. Since $\SF^s((x,y),f\times f)\cap (U\times V)\neq \emptyset$, for any open sets $U$ and $V$, we have that $x\in \mathcal{X}^s(f)$ and $y\in \mathcal{X}^s(f)$.  

Reciprocally, let $(x,y)\in \mathcal{X}^s(f)\times \mathcal{X}^s(f)$. Any open set of $M\times M$ contains an open set like $U\times V$. Hence, there exists $a\in U\cap \SF^s(x,f)$ and $b\in V\cap \SF^s(y,f)$. 

For any $\eps>0$, there exists $N$ such that $n\geq N$ implies $d(f^n(x),f^n(a))<\eps$ and $d(f^n(y),f^n(b))<\eps$. So 
$$d((f\times f)^n((x,y)),(f\times f)^n((a,b)))<\eps \textrm{ if }n\geq N.$$ 
Hence, $(a,b)\in \SF^s((x,y),f\times f)$.

Thus $\mathcal{X}^s(f\times f)=\mathcal{X}^s(f)\times \mathcal{X}^s(f)$ and the result follows.
\end{proof}

It is possible to preserve $m$-minimality under some conjugacies.

\begin{proposition}
Let $f$ and $g$ be two partially hyperbolic diffeomorphisms. Let $h$ be a $m$-regular homeomorphism (i.e. preserves sets of null $m$-measure), such that $h(\SF^s(x))=\SF^s(h(x))$. Then $f$ is $m s$-minimal if, and only if, $g$ is $m s$-minimal.
\end{proposition}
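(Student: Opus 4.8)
The plan is to show that the homeomorphism $h$ carries the set $\mathcal{X}^s(f)$ exactly onto $\mathcal{X}^s(g)$, and then to transfer the full-measure condition across this bijection using the $m$-regularity of $h$. The starting point is the elementary topological fact that a homeomorphism sends dense subsets to dense subsets: if $D\subset M$ is dense and $U\subset M$ is open and nonempty, then $h^{-1}(U)$ is open and nonempty, so it meets $D$, whence $U$ meets $h(D)$. I would apply this with $D=\SF^s(x,f)$ for a point $x\in\mathcal{X}^s(f)$; together with the hypothesis $h(\SF^s(x,f))=\SF^s(h(x),g)$ this shows that $\SF^s(h(x),g)$ is dense, i.e. $h(x)\in\mathcal{X}^s(g)$. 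Hence $h(\mathcal{X}^s(f))\subset\mathcal{X}^s(g)$.

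For the reverse inclusion I would observe that $h^{-1}$ is again an $m$-regular homeomorphism which intertwines the stable foliations in the required way: substituting $y=h(x)$ into the hypothesis gives $h^{-1}(\SF^s(y,g))=\SF^s(h^{-1}(y),f)$. Running the same density argument with $h^{-1}$ in place of $h$ then yields $h^{-1}(\mathcal{X}^s(g))\subset\mathcal{X}^s(f)$. Combining the two inclusions gives the equality $h(\mathcal{X}^s(f))=\mathcal{X}^s(g)$, and since $h$ is a bijection this is equivalent to $h(M\setminus\mathcal{X}^s(f))=M\setminus\mathcal{X}^s(g)$.

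Finally, suppose $f$ is $ms$-minimal, so that $m(M\setminus\mathcal{X}^s(f))=0$. Because $h$ is $m$-regular it sends this null set to a null set, giving $m(M\setminus\mathcal{X}^s(g))=0$, which is exactly $ms$-minimality of $g$; the converse follows symmetrically by applying the argument to $h^{-1}$. I do not expect a genuine obstacle in this proof: the two points that merely demand care are that denseness of a leaf is a purely topological property, hence preserved under the homeomorphism, and that one must phrase the measure step in terms of the complements $M\setminus\mathcal{X}^s$ (the null sets) rather than $\mathcal{X}^s$ itself, since $m$-regularity is stated as the preservation of sets of null $m$-measure.
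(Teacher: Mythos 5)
Your proof is correct and is essentially the paper's proof: the paper's entire argument is the single observation that $h(\mathcal{X}^s(f))=\mathcal{X}^s(g)$, and you have simply supplied the details (density is a topological property preserved by homeomorphisms, the leaf-matching hypothesis, and the transfer of null complements via $m$-regularity of $h$ and $h^{-1}$) that the paper leaves implicit.
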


\begin{proof}
Just notice that $h(\mathcal{X}^s(f))=\mathcal{X}^s(g)$.
\end{proof}

\begin{remark}
About the hypothesis of the previous result. There are partially hyperbolic system which are conjugated but the conjugacy do not sends strong leaves on strong leaves, see for instance \cite{YGZ}. Even so, it is a question if both can be $ms$-minimal. Another related question is the following if $f$ and $g$ are two partially hyperbolic diffeomorphisms central conjugated (see \cite{Ha}) such that $f$ is $ms$-minimal. Is it true that $g$ is also $ms$-minimal?
\end{remark}

Now, we will show that $m$-minimality is a $G_{\delta}$ property, for any borelian probability $m$. We denote by $\mathcal{PH}^1_{m}(M)$ the set of partially hyperbolic diffeomorphisms which preserve $m$, i.e. $f^*m=m$, endowed with the $C^1$ topology.  

For any partially hyperbolic diffeomorphism $f$ we denote by  $\mathcal{X}_{\delta}^s(f)$ (resp. $\mathcal{X}_{\delta}^u(f)$ ) the subset of points $x$ in $M$ such that $\mathcal{F}^s(x)$ (resp. $\mathcal{F}^u(x)$) is $\delta$-dense in $M$. Recall,  a subset $A\subset M$ is {\it $\delta$-dense}, if $A$ intersects any open ball with diameter larger than $\delta$.  Also, we denote by $\mathcal{F}_K^{s(u)}(x)$ a compact disc with radius $K$ and centre $x$ inside the leaf $\mathcal{F}^{s(u)}(x)$.  Recall, that $\mathcal{F}_K^{s(u)}(x)$ varies continuously with respect to the diffeomorphism $f$ and with respect to $x$. In particular, we can conclude that $\mathcal{X}_{\delta}^s(f)$ (resp. $\mathcal{X}_{\delta}^u(f)$) is an open subset of $M$.

\begin{proposition}
The set of $m$-minimal diffeomorphisms is a countable intersection of open sets of $\mathcal{PH}^1_{m}(M)$.
\label{prop.5.1}\end{proposition}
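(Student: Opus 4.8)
The plan is to use the compact discs $\mathcal{F}^s_K(x)$ as the basic gadget, since these (unlike the full, noncompact leaves) vary continuously with both $f$ and $x$, and to reduce the full-measure condition to a countable family of lower semicontinuous conditions. I will treat the $ms$-minimal part in detail; the $mu$-part is identical and $m$-minimality is their intersection.

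First I would localize density to the compact discs. Write $\mathcal{X}^s_{\delta,K}(f)=\{x\in M:\ \mathcal{F}^s_K(x)\text{ is }\delta\text{-dense}\}$. A short compactness argument (cover $M$ by a finite net, and note that each witnessing point of the leaf lies in some finite disc, so passing to the largest radius among finitely many witnesses loses only a controlled amount of density) shows that $\mathcal{F}^s(x)$ is dense if and only if, for every $n$, there is some $K$ with $\mathcal{F}^s_K(x)$ being $(1/n)$-dense. Hence, setting $G_n(f)=\bigcup_K \mathcal{X}^s_{1/n,K}(f)$, I obtain $\mathcal{X}^s(f)=\bigcap_n G_n(f)$, a decreasing intersection of sets that are open in $M$ (each $\mathcal{X}^s_{1/n,K}(f)$ is open, as already noted in the excerpt). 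Because $m$ is a probability measure, $m(\mathcal{X}^s(f))=1$ if and only if $m(G_n(f))=1$ for every $n$, using $m(M\setminus\mathcal{X}^s(f))\le\sum_n m(M\setminus G_n(f))$; so $f$ is $ms$-minimal if and only if $m(G_n(f))=1$ for all $n$.

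The core step is to promote each condition $m(G_n(f))=1$ to a $G_{\delta}$ condition in $f$. Using the joint continuity of $\mathcal{F}^s_K(x)$ in $(f,x)$, the set $\mathcal{O}_n=\{(f,x):\ x\in G_n(f)\}$ is open in $\mathcal{PH}^1_m(M)\times M$, and $G_n(f)$ is exactly its slice over $f$. I would then show that $f\mapsto m(G_n(f))$ is lower semicontinuous: given $f_0$ and $\varepsilon>0$, choose by inner regularity a compact $C\subset G_n(f_0)$ with $m(C)>m(G_n(f_0))-\varepsilon$; for each $x\in C$ pick a product box $\mathcal{U}_x\times V_x\subset \mathcal{O}_n$ around $(f_0,x)$, extract a finite subcover of $C$ by the $V_x$, and intersect the corresponding $\mathcal{U}_x$ to get a neighborhood $\mathcal{U}$ of $f_0$ with $C\subset G_n(f)$ for every $f\in\mathcal{U}$, whence $m(G_n(f))\ge m(C)>m(G_n(f_0))-\varepsilon$. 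Since $m(G_n(f))\le 1$ always, lower semicontinuity yields $\{f:m(G_n(f))=1\}=\bigcap_k\{f:m(G_n(f))>1-1/k\}$, a $G_{\delta}$ subset of $\mathcal{PH}^1_m(M)$.

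Finally I would assemble the pieces: $\{f:\ f\text{ is }ms\text{-minimal}\}=\bigcap_n\{f:m(G_n(f))=1\}$ is a countable intersection of $G_{\delta}$ sets, hence $G_{\delta}$; the same argument applied to $\mathcal{F}^u$ shows the set of $mu$-minimal diffeomorphisms is $G_{\delta}$; and the set of $m$-minimal diffeomorphisms, being their intersection, is again a countable intersection of open sets. The only delicate points are the measure-theoretic reduction (routine, via countable subadditivity on complements) and the lower semicontinuity step, which I expect to be the main obstacle, since it is precisely where the continuity of the discs, the inner regularity of $m$, and the compactness of $M$ must be combined; the remainder is bookkeeping.
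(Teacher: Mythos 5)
Your proposal follows essentially the same route as the paper's proof. The paper writes the set of $m$-minimal diffeomorphisms as a countable intersection of the sets $\mathcal{B}_{m}^{s}(\varepsilon,\delta)$ and $\mathcal{B}_{m}^{u}(\varepsilon,\delta)$ of \reff{d.B.s}--\reff{d.B.u} over $\varepsilon=1/k$, $\delta=1/n$, and proves each of them open (Lemma \ref{l.aberto}) by exactly the argument you give: continuity of the compact discs $\mathcal{F}^s_{K}$ with respect to $(f,x)$, followed by an inner-regularity/finite-subcover extraction (the paper's ``Vitalli's Theorem'' step is your compact set $C$ with $m(C)>m(G_n(f_0))-\varepsilon$ covered by finitely many product boxes). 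Your two repackagings --- using the disc-based sets $G_n(f)=\bigcup_K \mathcal{X}^s_{1/n,K}(f)$ instead of the leaf-based sets $\mathcal{X}^s_{\delta}(f)$, and phrasing openness as lower semicontinuity of $f\mapsto m(G_n(f))$ --- are cosmetic, not a different method.

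There is one point the paper treats that your proof glosses over, and it sits exactly under your phrase ``joint continuity of $\mathcal{F}^s_K(x)$ in $(f,x)$.'' With the paper's convention that the strong bundles are taken maximal (they contain every invariant sub-bundle contracted or expanded by some iterate), a diffeomorphism $g$ arbitrarily $C^1$-close to $f_0$ may have a strong stable bundle of strictly \emph{larger} dimension than that of $f_0$: part of the center of $f_0$ can become uniformly contracted after perturbation. In that case the strong stable discs of $g$ are not $C^1$-close to those of $f_0$ --- they do not even have the same dimension --- so openness of your set $\mathcal{O}_n$ does not follow from continuity alone. The repair is the final paragraph of the paper's proof of Lemma \ref{l.aberto}: the continuation $\tilde{E}^{ss}(g)$ of $E^s(f_0)$ remains uniformly contracted and dominated, its integral discs (via \cite{HPS}) do vary continuously with $(g,y)$ and are contained in the strong stable leaves of $g$; since a disc of radius $K$ tangent to $\tilde{E}^{ss}(g)$ lies inside $\mathcal{F}^s_K(g,y)$, its $1/n$-density forces $y\in G_n(g)$, restoring the product-box openness your lower-semicontinuity step needs. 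With that patch inserted, your argument is complete and coincides with the paper's.
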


\begin{proof}
For any $\eps, \delta>0$  we define:

\begin{equation}
\mathcal{B}_{m}^{s}(\eps,\delta)=\{f\in \mathcal{PH}_{m}^1(M) / \quad m(\mathcal{X}_{\delta}^s(f))>1-\eps \} \text{ and }
\label{d.B.s}\end{equation}
\begin{equation}
\mathcal{B}_{m}^u(\eps,\delta)=\{f\in \mathcal{PH}_{m}^1(M) / \quad m(\mathcal{X}_{\delta}^u(f))>1-\eps \} .
\label{d.B.u}\end{equation}

Observe that if a partially hyperbolic diffemorphism $f$ is $m s-$mini\-mal  (resp. $m u-$minimal), then $f$ belongs to $\mathcal{B}^s(\eps,\delta)$ (resp. $\mathcal{B}^u(\eps,\delta)$) for every $\eps$ and $\delta$ positive. In particular, the rest of the proof is a directly  consequence of the next lemma, Lemma \ref{l.aberto}, which implies  $m$-minimality is a $G_{\delta}$ property. 

\end{proof}

\begin{lemma}
\label{l.aberto}
The subsets $\mathcal{B}^s(\eps,\delta)$ and $\mathcal{B}^u(\eps,\delta)$ are open subsets of  $\mathcal{PH}^1_{m}(M)$. 
\end{lemma}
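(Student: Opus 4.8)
The plan is to show that each of $\mathcal{B}^s_m(\eps,\delta)$ and $\mathcal{B}^u_m(\eps,\delta)$ is open by proving that the map $f \mapsto m(\mathcal{X}^s_\delta(f))$ is lower semicontinuous in the $C^1$ topology. Since the two cases are symmetric, I would treat only $\mathcal{B}^s_m(\eps,\delta)$. Fix $f_0 \in \mathcal{B}^s_m(\eps,\delta)$, so that $m(\mathcal{X}^s_\delta(f_0)) > 1-\eps$. The goal is to produce a $C^1$-neighborhood $\mathcal{U}$ of $f_0$ such that $m(\mathcal{X}^s_\delta(g)) > 1-\eps$ for every $g \in \mathcal{U}$.

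The key mechanism is the continuous variation of compact pieces of strong leaves, which the excerpt has already recorded: the disc $\mathcal{F}^s_K(x)$ depends continuously on both $x$ and the diffeomorphism $f$. The idea is that $\delta$-density of a full leaf is, in fact, witnessed by a compact piece: if $\mathcal{F}^s(x,f_0)$ is $\delta'$-dense for some $\delta' < \delta$, then already some compact disc $\mathcal{F}^s_K(x,f_0)$ is $\delta'$-dense (by exhausting the leaf and using compactness of $M$ together with the definition of $\delta'$-density as meeting every ball of diameter $> \delta'$). First I would make this precise: for each $x \in \mathcal{X}^s_\delta(f_0)$ there is a radius $K=K(x)$ and a strict gap so that $\mathcal{F}^s_K(x,f_0)$ is $\delta''$-dense with $\delta'' < \delta$. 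Then, because $\mathcal{F}^s_K(\cdot,\cdot)$ varies continuously, for $g$ close to $f_0$ and $y$ close to $x$ the disc $\mathcal{F}^s_K(y,g)$ stays within a small Hausdorff distance of $\mathcal{F}^s_K(x,f_0)$, hence remains $\delta$-dense. This shows that points of $\mathcal{X}^s_\delta(f_0)$ persist: there is an open set that survives into $\mathcal{X}^s_\delta(g)$.

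To turn this pointwise persistence into a measure estimate, I would work at the level of measure rather than leaf by leaf. Choose a compact set $C \subset \mathcal{X}^s_\delta(f_0)$ with $m(C) > 1-\eps$; this is possible by inner regularity of $m$ together with $m(\mathcal{X}^s_\delta(f_0)) > 1-\eps$. Over the compact set $C$ one can extract, by continuity and compactness, a uniform radius $K$ and a uniform density gap, so that every $x \in C$ has $\mathcal{F}^s_K(x,f_0)$ being $\delta''$-dense with a single $\delta'' < \delta$. A standard covering/uniform-continuity argument on the continuous map $(g,x) \mapsto \mathcal{F}^s_K(x,g)$ then yields a $C^1$-neighborhood $\mathcal{U}$ of $f_0$ and a neighborhood of $C$ on which all these discs remain $\delta$-dense. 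Since the measure $m$ is fixed (all maps in $\mathcal{PH}^1_m(M)$ preserve the same $m$, and in any case $m$ is a fixed Borel measure here), the inclusion $C \subset \mathcal{X}^s_\delta(g)$ for $g \in \mathcal{U}$ gives $m(\mathcal{X}^s_\delta(g)) \geq m(C) > 1-\eps$, which is exactly $g \in \mathcal{B}^s_m(\eps,\delta)$.

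The main obstacle I anticipate is the uniformity step: passing from the pointwise statement ``each $\delta$-dense leaf has a $\delta''$-dense compact piece'' to a \emph{single} radius $K$ and a \emph{single} gap $\delta''$ valid on all of the compact set $C$, and then converting continuous variation of leaf-discs into a genuine measure lower bound. The delicacy is that the radius needed to witness $\delta$-density could a priori blow up as $x$ ranges over $\mathcal{X}^s_\delta(f_0)$; restricting to the compact $C$ and invoking compactness of $M$ is what controls this. One must also be careful that the continuous dependence of $\mathcal{F}^s_K(x,g)$ on $(x,g)$ is uniform over the compact set $C$, so that a single neighborhood $\mathcal{U}$ works for every $x \in C$ simultaneously; this again follows from compactness but should be stated explicitly. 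Once these uniformities are secured, the conclusion that $\mathcal{X}^s_\delta$ opens up and the measure does not decrease below $1-\eps$ is immediate.
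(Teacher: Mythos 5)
Your overall strategy is the same as the paper's: witness $\delta$-density by compact leaf discs $\mathcal{F}^s_K$, use their continuous dependence on $(x,g)$ to make them persist under $C^1$-perturbation, and convert this into a measure bound by compactness (your inner-regular compact set $C$ plays exactly the role of the paper's finite subfamily $U_{x_1},\ldots,U_{x_k}$ covering most of $\mathcal{X}^s_\delta(f)$). The problem is that your key technical step puts the ``strict gap'' on the wrong side, and as stated it is false: from $x\in\mathcal{X}^s_\delta(f_0)$, i.e.\ the full leaf meets every open ball of diameter $>\delta$, you cannot extract a compact disc that is $\delta''$-dense with $\delta''<\delta$. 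A subset of the leaf being $\delta''$-dense would force the whole leaf to be $\delta''$-dense, which is strictly stronger than the hypothesis; picture a leaf whose closure is the complement of an open ball of diameter exactly $\delta$ --- it is $\delta$-dense but not $\delta''$-dense for any $\delta''<\delta$. What compactness of $M$ actually yields (e.g.\ by Dini's theorem applied to the functions $y\mapsto\dist(y,\mathcal{F}^s_K(x,f_0))$, which decrease uniformly to $\dist(y,\mathcal{F}^s(x,f_0))$) is the opposite inequality: for every $\delta''>\delta$ some compact disc is $\delta''$-dense. With the gap on that side, your Hausdorff-perturbation step only gives $\delta'''$-density for some $\delta'''>\delta$, hence $g\in\mathcal{B}^s_m(\eps,\delta''')$ rather than $g\in\mathcal{B}^s_m(\eps,\delta)$ --- still enough for the $G_\delta$ statement of Proposition \ref{prop.5.1} (intersect the interiors of the $\mathcal{B}$'s), but not the lemma as stated. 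For comparison, the paper asserts outright that some $\mathcal{F}^s_{K_x}(f,x)$ is $\delta$-dense and that nearby discs remain $\delta$-dense; that glosses over the same boundary case, but it does not rest on the false strict improvement that your argument leans on.

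Separately, you skip a case that the paper treats explicitly. Under the paper's convention the strong bundles are the \emph{maximal} uniformly contracted/expanded ones, so a diffeomorphism $g$ that is $C^1$-close to $f_0$ may have a strong stable bundle of strictly larger dimension (part of the center of $f_0$ can become uniformly contracted for $g$). In that case ``the'' disc $\mathcal{F}^s_K(y,g)$ is not a small perturbation of $\mathcal{F}^s_K(x,f_0)$ --- it does not even have the same dimension --- so the continuity of $(x,g)\mapsto\mathcal{F}^s_K(x,g)$ that your whole argument invokes does not apply as stated. The paper's final paragraph resolves this: such a $g$ admits a finer dominated splitting $\tilde{E}^{ss}\oplus\tilde{E}^{cs}\oplus\cdots$ with $\tilde{E}^{ss}$ close to $E^s(f_0)$, and by Hirsch--Pugh--Shub the discs tangent to $\tilde{E}^{ss}$ are $C^1$-close to the $f_0$-discs and are contained in the strong stable leaves of $g$; $\delta$-density of the sub-disc then gives $\delta$-density of the (larger) strong leaf. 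You need this step, or else an explicit restriction to the equal-dimension case with a justification of why it suffices, to close the argument.
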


\begin{proof}[Proof of Lemma \ref{l.aberto}]
Since $f$ belongs to $\mathcal{B}^s(\eps,\delta)$ if, and only if, $f^{-1}$ belongs to $\mathcal{B}^u(\eps,\delta)$, it is enough to prove that $\mathcal{B}^s(\eps,\delta)$ is an open subset of $\mathcal{PH}^1_{m}(M)$. 

 Let $f\in  \mathcal{B}^s(\eps,\delta)$ be a partially hyperbolic diffeomorphism with decomposition $TM=E^s\oplus E^c\oplus E^u$. 
 
By continuity of the partially hyperbolic splitting any diffeomorphism $g$ close enough to $f$ is also  partially hyperbolic.   We suppose first that every diffeomorphism $g$ close to $f$ has a partially hyperbolic splitting with stable and unstable bundle dimensions equal to the dimensions of the respectively sub-bundles in the partially hyperbolic splitting of $f$.

Now,  given $x\in \mathcal{X}^s_{\delta}(f)$, there exists $K_x>0$ such that  $\mathcal{F}^s_{K_x}(f,x)$  is  $\delta$-dense in $M$, since $M$ is a compact manifold. Thus since the strong stable manifolds varies continuously with respect to the diffeomorphism in compact parts, there exists a neighborhood $\mathcal{V}_x$ of $f$ and a neighborhood $U_x$ of $x$, such that:
$$
\mathcal{F}^s_{K_x}(g,y) \text{ is } \delta-\text{dense in } M \text{ for every } y\in U_x \text{ and } g\in \mathcal{V}_x.
$$       
In particular, note that $U_x\subset \mathcal{X}^s_{\delta}(g)$ for every $g\in \mathcal{V}_x$.

These open sets $U_x$ give a natural open cover of $\mathcal{X}^s_{\delta}(f)$, and since  $m(\mathcal{X}^s_{\delta}(f))>1-\eps$,  we can use Vitalli's Theorem, to obtain $x_ 1, \ldots, x_k\in \mathcal{X}^s_{\delta}(f)$ such that 
$$
m\left(\bigcup_{i=1}^{i=n} U_{x_i}\right)>1-\eps.
$$
Hence, considering $\mathcal{V}=\cap_{1\leq i\leq n} \mathcal{V}_{x_i}$, we have that
$m(\mathcal{X}^s_{\delta}(g))>1-\eps$ for every $g\in \mathcal{V}$, which implies $\mathcal{V}\subset \mathcal{B}^s(\eps,\delta)$.

Now, if there is $g$ close to $f$  with different strong sub bundles  dimension from $f$, then we note that $g$  has a partially hyperbolic splitting $\tilde{E}^{ss}\oplus \tilde{E}^{cs}\oplus \tilde{E}^c\oplus \tilde{E}^{cu}\oplus\tilde{E}^{uu}$ of $TM$ to $g$, such that the stable (unstable) bundle  of $g$ is $\tilde{E}^{ss(uu)}\oplus \tilde{E}^{cs (cu)}$, with $\tilde{E}^{ss(uu)}(g)$ being a sub bundle close to $ E^{s(u)}(f)$.  Hence, by \cite{HPS}, there are invariant sub manifolds integrating $\tilde{E}^{ss(uu)}(g)$ contained in the strong stable (resp. unstable)  leaf of $g$  which are close to the strong manifolds of $f$, and thus the above arguments can also be used in this situation to conclude the proof.  

\end{proof}

\section{Dynamical Consequences of $m$-minimality}

In this section, we give three consequences of the $m$-minimality.

\subsection {Topological mixing}

In this subsection we prove the following:

\begin{proposition}
Let $f$ be a partially hyperbolic diffeomorphism preserving a volume form $m$. If $f$ is $ms$ or $mu$-minimal, then $f$ is topologically mixing. 
\label{a.e.implies.mixing}\end{proposition}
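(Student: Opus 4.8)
The plan is to reduce to a single case and then argue by contradiction using measure preservation, deliberately \emph{avoiding} any appeal to uniform density of unstable discs (which is genuinely false under $mu$-minimality, as I explain below). First I would reduce the statement to the $mu$-minimal case. Topological mixing is invariant under passing to the inverse, since $f^j(U)\cap V\neq\emptyset$ for all large $j$ is, after relabeling $U,V$, the same condition for $f^{-1}$; and $f^{-1}$ also preserves $m$. By the invariance-under-iteration proposition proved above (with $n=-1$), $f$ is $ms$-minimal if and only if $f^{-1}$ is $mu$-minimal. Hence it suffices to show: if $f$ is $mu$-minimal then $f$ is topologically mixing.

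So assume $f$ is $mu$-minimal but not mixing. Then there are nonempty open sets $U,V$ and a sequence $n_k\to\infty$ with $f^{n_k}(U)\cap V=\emptyset$. The naive idea would be to push a small unstable disc $D_0\subset U$ forward and hope that the expanded disc $f^{n_k}(D_0)$ becomes $\delta$-dense, hence meets $V$. \textbf{This is exactly the main obstacle:} $mu$-minimality only gives that $m$-almost every unstable leaf is dense, and a measure-zero invariant set of points may have non-dense leaves, so there is no uniform radius $R$ making every unstable disc of radius $R$ dense. The centers $f^{n_k}(x)$ could drift toward such bad points, and I cannot control the density radius along the way. The resolution is that I do \emph{not} need the pushed disc to be dense; I only need to trap its center in a set of small measure.

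Concretely, I would fix a point $x_0\in U$ and $\rho_0>0$ with the ball $B(x_0,\rho_0)\subset U$, and set $U'=B(x_0,\rho_0/2)$, so that for every $x\in U'$ the intrinsic unstable disc $\mathcal{F}^u_{\rho_0/2}(x)$ is contained in $B(x,\rho_0/2)\subset U$. Using that partial hyperbolicity yields uniform expansion $\|Df^n_x v\|\ge c\lambda^n\|v\|$ on $E^u$ (with $c>0$, $\lambda>1$), the image $f^{n_k}(\mathcal{F}^u_{\rho_0/2}(x))$ contains the disc $\mathcal{F}^u_{R_k}(f^{n_k}x)$ with $R_k=c\lambda^{n_k}\rho_0/2$, where crucially $R_k\to\infty$ with a threshold independent of $x\in U'$. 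Since $f^{n_k}(\mathcal{F}^u_{\rho_0/2}(x))\subset f^{n_k}(U)$ misses $V$, I conclude $\mathcal{F}^u_{R_k}(f^{n_k}x)\cap V=\emptyset$, i.e. $f^{n_k}(x)\in N_{R_k}$, where I define
$$
N_R=\{\,y\in M:\ \mathcal{F}^u_R(y)\cap V=\emptyset\,\}.
$$
Each $N_R$ is closed (its complement is open, by continuous variation of the compact discs $\mathcal{F}^u_R(y)$ and openness of $V$) and $N_R$ is decreasing in $R$.

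To finish, fix $R_0$; for all large $k$ we have $R_k\ge R_0$, hence $f^{n_k}(U')\subset N_{R_k}\subset N_{R_0}$, so $U'\subset f^{-n_k}(N_{R_0})$ and, by $f^*m=m$,
$$
m(U')\le m\bigl(N_{R_0}\bigr).
$$
Now $\bigcap_{R}N_R=\{\,y: \mathcal{F}^u(y)\cap V=\emptyset\,\}$, and this set has $m$-measure zero: by $mu$-minimality $m$-almost every $y$ has $\mathcal{F}^u(y)$ dense, hence meeting the open set $V$. Since $M$ has finite volume, continuity of measure from above gives $m(N_{R_0})\to 0$ as $R_0\to\infty$, so I may choose $R_0$ with $m(N_{R_0})<m(U')$, contradicting the displayed inequality. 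This proves mixing. The two points requiring care are (i) recognizing that the density approach must be abandoned in favor of the measure-trapping argument, and (ii) the bookkeeping ensuring the uniform disc radius $\rho_0/2$ and the uniform expansion rate make the threshold on $k$ independent of the center, so that the whole set $U'$ is swept into $N_{R_0}$ at a common time.
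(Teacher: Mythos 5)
Your proof is correct, and while it shares the paper's skeleton --- the reduction to the $mu$-minimal case via $f^{-1}$, the uniform expansion estimate $f^{n}(\mathcal{F}^u_{r}(x))\supset \mathcal{F}^u_{c\lambda^{n}r}(f^{n}(x))$, and the use of $f^*m=m$ as a substitute for the uniform density that $mu$-minimality fails to provide --- the bookkeeping is genuinely different. The paper argues directly: repeating the Vitali/compactness argument of Lemma \ref{l.aberto}, it produces an open set $W$ with $m(W)>1-m(B)$ and a uniform $K$ such that $\mathcal{F}^{uu}_{K}(y)$ is $\delta$-dense for every $y\in W$ (with $\delta$ adapted to $V$); then, for every $n\geq N_0$, volume preservation forces a point $x\in B\cap f^{-n}(W)$, whose expanded unstable disc is $\delta$-dense and therefore meets $V$. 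You argue by contradiction with the dual object: the nested closed sets $N_R$ of centers whose radius-$R$ discs miss the fixed target $V$; $mu$-minimality makes $\bigcap_{R} N_R$ null, continuity from above makes $m(N_{R_0})$ smaller than $m(U')$, and trapping the entire ball, $f^{n_k}(U')\subset N_{R_0}$, then contradicts volume preservation. Your route is leaner: it needs neither $\delta$-density nor the covering argument, only that almost every leaf meets $V$, and the strict inequality $m(N_{R_0})<m(U')$ cleanly avoids a boundary case the paper brushes past (it states $m(W)\geq 1-b$ but later uses $m(W)>1-b$; the slip is harmless, since the covering argument does give the strict bound). What the paper's formulation buys in exchange is uniformity and reusability: its set $W$ carries a single density radius $K$ valid for all targets at scale $\delta$, it recycles the sets $\mathcal{X}^u_{\delta}$ and Lemma \ref{l.aberto} already introduced for the $G_\delta$ property (Proposition \ref{prop.5.1}), and it exhibits the mixing threshold $N_0$ directly rather than extracting it from a contradiction. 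Both mechanisms are sound, and your diagnosis of the key obstacle --- that mere $mu$-minimality gives no uniform radius of density, so the naive disc-expansion argument must be replaced by a measure-trapping one --- is exactly the point at which the paper, too, turns to volume preservation.
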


{\it Proof:  } Since $f$ is $ms$-minimal if, and only if, $f^{-1}$ is $mu$-minimal, without loss of generality we can suppose $f$ is $mu$-minimal to prove the proposition. 

  Let $U$ and $V$ be two arbitrary open sets of $M$. We choose $\eps>0$ and an open ball $B\subset U$  of diameter $2\eps$, such that the compact disc $\mathcal{F}^s_{\eps}(x)$ with radius $\eps$ and centre $x$ inside the strong stable leaf of $x$, is contained in $U$ for every $x\in B$.  Now, let $\delta>0$ be such that there is an open ball of diameter $\delta>0$ inside $V$. In particular, any $\delta$-dense subset of $M$  should intersect $V$. We also denote  $b=m(B)$. 

Now, using that $f$ is $mu$-minimal, i.e.  $m(\mathcal{X}^u(f))=1$, and the continuity of the strong unstable manifold, we can repeat the arguments in the proof of Lemma \ref{l.aberto} to find an open set $W\subset M$ and $K>0$ such that  $\mathcal{F}_K^{uu}(x)$ is  $\delta$-dense for every $x\in W$ and $m(W)\geq 1-b$.

Now, by the partial hyperbolicity of $f$ there exists $N_0>0$ such that for any $n\geq N_0$ and any $x\in M$ if $D\supset \mathcal{F}_{\eps}^{uu}(x)$  then $f^n(D)$ contains  $\mathcal{F}_K^{uu}(f^n(x))$. Using this information, we will prove that $f^n(U)\cap V\neq \emptyset$, for any $n\geq N_0$, which implies $f$ is topologically mixing, since $U$ and $V$ were taken arbitrary.  

 Given $n\geq N_0$, since $f$ preserves the Lebesgue measure $m$, $m(f^{-n}(W))=m(W)$ which is bigger than $1-b$. Hence, since $b=m(B)$, there exists  $x\in f^{-n}(W)\cap B$. By choice of $B$ we can consider a disk $D\subset \mathcal{F}^{uu}(x)\cap U$ with centre $x$ and radius $\eps>0$. Thus, $f^n(D)$ contains $\mathcal{F}^{uu}_K(x)$, since $n\geq N_0$. Therefore, provided that $f^n(x)\in W$, $f^n(D)$ is $\delta$-dense in $M$ which implies $f^n(D)\cap V\neq \emptyset$.
 
$\hfill\square$

\subsection{Weak Ergodicity}

In this sub-section we prove the following theorem: 

\begin{theorem}
Let $f$ be a $C^{1+\alpha}$-partially hyperbolic diffeomorphism preserving the Lebesgue measure $m$. If $f$ is  $ms$-minimal or $mu$-minimal then  $f$  is weakly ergodic.
 \end{theorem}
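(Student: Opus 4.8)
The plan is to reduce weak ergodicity to a statement about a single open set: if for every nonempty open $V\subset M$ the set of points whose forward orbit meets $V$ has full $m$-measure, then intersecting over a countable basis $\{V_j\}$ shows that $m$-almost every point has dense forward orbit, which is weak ergodicity. Assume first that $f$ is $mu$-minimal; the $ms$ case will be entirely symmetric, using the strong stable foliation (and forward Birkhoff averages) in place of the unstable one. Fix a nonempty open set $V$ and set
\[
F=\{x\in M:\ f^{n}(x)\notin V\ \text{for all }n\ge 0\}.
\]
A direct check gives $f(F)\subset F$, so since $f$ preserves $m$ we have $m(F\setminus f(F))=0$; the decreasing intersection $\tilde F=\bigcap_{n\ge 0}f^{n}(F)$ is then a genuinely invariant set, $f^{-1}(\tilde F)=\tilde F$, with $m(\tilde F)=m(F)$ and $\tilde F\subset F$. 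The goal is to prove $m(F)=0$.

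The key ingredient is the \emph{absolute continuity} of the strong unstable foliation, which holds precisely because $f$ is $C^{1+\alpha}$. Combined with the classical Hopf argument, this yields that any $f$-invariant measurable set is \emph{essentially $\mathcal{F}^u$-saturated}. Indeed, for a continuous $g$ the backward Birkhoff average is constant along unstable leaves (for $y\in\mathcal{F}^u(x)$ one has $d(f^{-k}x,f^{-k}y)\to 0$, so the backward Ces\`aro sums share the same limit) and agrees $m$-a.e.\ with the forward average; approximating the invariant function $\chi_{\tilde F}$ in $L^1$ by such $g$'s shows $\chi_{\tilde F}$ is a.e.\ constant along unstable leaves, and absolute continuity then upgrades this, via Fubini along the foliation, to: for $m$-a.e.\ $x\in\tilde F$ the leaf $\mathcal{F}^u(x)$ is contained in $\tilde F$ up to a set of zero leaf-measure.

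Now suppose for contradiction that $m(F)>0$, hence $m(\tilde F)>0$. Since $f$ is $mu$-minimal, $m$-a.e.\ point has dense unstable leaf, so within the positive-measure set $\tilde F$ I can choose a point $x$ for which simultaneously $\mathcal{F}^u(x)$ is dense in $M$ and $\mathcal{F}^u(x)\subset\tilde F$ modulo leaf-measure zero. Density forces $\mathcal{F}^u(x)\cap V$ to be a nonempty relatively open subset of the leaf, hence of positive leaf-measure; but every point of $V$ has forward orbit meeting $V$ at time $0$, so $V\cap F=\emptyset$ and a fortiori $V\cap\tilde F=\emptyset$. Thus a positive-leaf-measure part of $\mathcal{F}^u(x)$ lies outside $\tilde F$, contradicting essential saturation. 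Therefore $m(F)=0$ for every $V$, and intersecting over a countable basis of $M$ shows that $m$-almost every forward orbit is dense; that is, $f$ is weakly ergodic.

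I expect the main obstacle to be the essential-saturation step, namely making rigorous the passage from ``$\chi_{\tilde F}$ is constant along unstable leaves'' to ``$\tilde F$ contains almost every whole leaf.'' This is exactly where $C^{1+\alpha}$ enters through absolute continuity of the strong foliation, and where the corresponding $C^1$ statement would fail; once it is in hand, the remaining steps (forward-invariance of $F$, passage to the invariant representative $\tilde F$, and the density-versus-saturation contradiction) are soft measure-theoretic bookkeeping.
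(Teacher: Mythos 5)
Your proof is correct, but it takes a genuinely different route from both of the proofs in the paper. The paper's first proof is constructive and avoids ergodic theorems altogether: it fixes an open set $U$, applies Poincar\'e recurrence inside $U$, observes that every point on the strong stable leaf of a recurrent point of $U$ eventually enters $U$, and then uses absolute continuity plus Lebesgue density points to see that the union of such leaves has full measure. The paper's second proof is structurally closer to yours: it also reduces to showing that the invariant sets $A_k$ of points whose orbit avoids a basis element are null, but it keeps these sets \emph{closed} and invokes Zhang's theorem (compact partially hyperbolic sets carrying an acip are bi-saturated) so that a single dense leaf inside a closed saturated set forces the set to be all of $M$. You instead stay purely measure-theoretic: your key lemma is the Hopf-argument fact that every $f$-invariant measurable set is \emph{essentially} saturated by global strong unstable (resp.\ stable) leaves, and the contradiction comes from comparing leaf-measure of the dense leaf inside $V$ with essential saturation. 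What each approach buys: yours is self-contained modulo the two classical inputs (Birkhoff and absolute continuity of the strong foliations for $C^{1+\alpha}$ maps) and proves the slightly stronger statement that forward orbits are dense; the paper's second proof, by leaning on Zhang's stronger exact-saturation result for compact sets, additionally yields Lemma \ref{L1} (any compact invariant set of positive measure equals $M$) and the minimality criterion of Proposition \ref{C1}, which have independent interest; the paper's first proof is the most elementary.

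One technical point deserves emphasis, and it is exactly the one you flag as the main obstacle. Local absolute continuity (Fubini in foliation boxes) only controls intersections of a null set with \emph{plaques} of almost every leaf, and in general the saturation of a null set by a foliation can have positive measure, so passing to \emph{whole global leaves} is not automatic. In your setting this is rescued by invariance: the bad set $N=\{\psi\neq\chi_{\tilde F}\}$ (where $\psi$ is the leafwise-constant limit of backward averages) is $f$-invariant, and writing $\mathcal{F}^u(x)=\bigcup_{n\geq 0}f^{n}\bigl(\mathcal{F}^u_{loc}(f^{-n}x)\bigr)$ reduces the global statement to the local one applied along the (measure-preserved) backward orbit; equivalently, one can note that backward Birkhoff averages of continuous functions are constant on entire global unstable leaves, since points on the same global leaf are backward asymptotic, so $\psi$ is globally leaf-constant on an exactly saturated full-measure domain. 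With that detail supplied, your argument closes completely.
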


We will give two proofs. The first one is direct and uses ideas from Pesin \cite{P}.  The other, is more indirect, using a result due to Zhang  \cite{Z}. However, since it produces another results that can be useful, we put that proof also.

\vspace{0,1cm}
{\it First proof:}
We fix an open set $U$, by Poincar\'e's Recurrence Theorem, we have a subset $R\subset U$ with $m(U-R)=0$ formed by recurrent points. Hence, if $z\in R$ and $w\in \SF^s(z)$ there exists $n_k\to \infty$ such that $f^{n_k}(w)\in U$.

For any $x\in \mathcal{X}^s$, we know that there exists $y\in \SF^s(x)\cap U$. Moreover, there exists an open set $V\subset U$ containing $y$ such that $\bigcup_{z\in V}\SF^s(z)$ is a neighborhood of $x$.

But, by absolute continuity, we have that $W_x=\bigcup_{z\in R\cap V}\SF^s(z)$ has full measure in $\bigcup_{z\in U}\SF^s(z)$ and the orbit of every point in $W$ meets $U$.

Hence, using Lebesgue density points, we have that $W=\bigcup_{x\in \mathcal{X}^s}W_x$ is a full measure set. Moreover, the orbit of every point in $W$ meets $U$. Using a countable basis of neighborhoods we obtain the weak ergodicity.
$\hfill\square$

\begin{remark}
It is important to remark that the arguments used in the Proof 1 can also be used together with accessibility property to obtain weakly ergodicity, for $C^{1+\alpha}$ volume preserving partially hyperbolic diffeomorphisms. See \cite{ACW} for such proof. 
\label{argumentoACW}\end{remark}

\vspace{0,2cm}

{\it Second proof:}
The second proof is based in the following result due to Zhang. We recall that an acip is an invariant probability which is absolutely continuous with respect to Lebesgue.

\begin{theorem} [Zhang  \cite{Z}] \label{zhang} Let $f \in  \operatorname{Diff}^r(M)$ for some $r > 1$ and $\Lambda$ be a strongly partially hyperbolic set supporting some acip $\mu$. Then $\Lambda$ is bi-saturated, that is, for each point $p\in \Lambda$, the global stable manifolds and the global unstable manifolds over $p$ lies on $\Lambda$.
\end{theorem}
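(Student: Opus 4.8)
The plan is to prove the theorem by reducing it to a saturation statement about the support of the acip, which we may take to be $\Lambda=\operatorname{supp}(\mu)$, and then to exploit the absolute continuity of the strong foliations, available precisely because $r>1$. First I would reduce in two ways. Since $f^{-1}$ is again $C^r$ partially hyperbolic, carries the same invariant measure $\mu$ (a measure is $f$-invariant iff it is $f^{-1}$-invariant), and satisfies $W^u(p,f)=W^s(p,f^{-1})$, it suffices to prove stable saturation, i.e. $W^s(p)\subset\Lambda$ for every $p\in\Lambda$; applying the result to $f^{-1}$ then yields unstable saturation, and the two together give bi-saturation. Moreover, writing the global leaf as $W^s(p)=\bigcup_{n\geq 0}f^{-n}\bigl(W^s_{\mathrm{loc}}(f^n(p))\bigr)$ and using that $\Lambda=\operatorname{supp}(\mu)$ is $f$-invariant, it is enough to prove the local statement $W^s_{\mathrm{loc}}(p)\subset\Lambda$ for every $p\in\Lambda$: if $z\in W^s(p)$ then $f^n(z)\in W^s_{\mathrm{loc}}(f^n(p))\subset\Lambda$ for large $n$, hence $z\in f^{-n}\Lambda=\Lambda$.

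Next I would bring in the conditional measures of $\mu$ along the strong stable foliation. Because $r>1$, the strong stable foliation $\mathcal{F}^s$ is absolutely continuous, so the Riemannian volume $m$—and therefore $\mu\ll m$—disintegrates over a stable foliation box $B\cong D^s\times T$ with conditional measures $\mu_t$ on the plaques $D^s\times\{t\}$ that are absolutely continuous with respect to the leaf volume $\operatorname{Vol}$, against a transverse measure $\nu$ on $T$. The decisive analytic step is to upgrade "absolutely continuous" to "strictly positive density": using the $f$-invariance of $\mu$ together with the uniform contraction along $E^s$ and the bounded-distortion estimates valid in the $C^{1+\alpha}$ category, one shows that $\mu$ is an $s$-Gibbs measure, so the densities $d\mu_t/d\operatorname{Vol}$ are given by the standard Gibbs cocycle and are continuous and strictly positive on each plaque. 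Consequently, for $\nu$-almost every $t$ the conditional $\mu_t$ has support equal to the entire plaque $D^s\times\{t\}$.

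With full-plaque support in hand, local saturation follows from a product argument. Disintegrating $\mu|_B=\int_T\mu_t\,d\nu(t)$ and using that $\operatorname{supp}(\mu_t)$ is the whole plaque for $\nu$-almost every $t$, one obtains $\operatorname{supp}(\mu|_B)=D^s\times\operatorname{supp}(\nu)$, a union of full stable plaques inside $B$. Thus $\Lambda=\operatorname{supp}(\mu)$ is locally $s$-saturated, and the invariance reduction of the first paragraph promotes this to $W^s(p)\subset\Lambda$ for all $p\in\Lambda$. Running the symmetric argument for $f^{-1}$ gives $W^u(p)\subset\Lambda$, completing the proof.

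I expect the main obstacle to be the strict positivity of the stable conditional densities, that is, the $s$-Gibbs property. Absolute continuity of $\mathcal{F}^s$ by itself only yields conditionals that are absolutely continuous with a priori possibly vanishing densities, so the support on a typical plaque could be a proper closed subset; it is the interaction of the invariance of $\mu$ with the uniform contraction along $E^s$ and the $C^{1+\alpha}$ bounded-distortion control that forces the Gibbs density formula and hence full-plaque support. This is exactly where the hypothesis $r>1$ is indispensable, both for the absolute continuity of the strong foliations and for the distortion estimates underlying the Gibbs structure.
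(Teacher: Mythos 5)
You should first be aware that the paper contains no proof of this statement: it is quoted, with attribution, from Zhang \cite{Z}, so the only meaningful comparison is with Zhang's published argument. Measured against that, your proposal takes a genuinely different route. Zhang derives the acip statement from the main (positive-volume) theorem of \cite{Z}: a strongly partially hyperbolic set of positive volume contains the global strong stable and unstable manifolds of Lebesgue-almost every of its points, proved by a Lebesgue density point argument resting on absolute continuity of the strong laminations; one then uses that $\operatorname{supp}(\mu)$ has positive volume, that saturated points are dense in it, and continuity of local plaques plus closedness. Your route instead disintegrates $\mu$ along stable plaques and upgrades absolutely continuous conditionals to conditionals with continuous, strictly positive densities. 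That upgrade is a genuine theorem (Pesin--Sinai, Ledrappier; see Chapter 11 of \cite{BDV}), valid by exactly the mechanism you name (invariance plus $C^{1+\alpha}$ bounded distortion along uniformly contracted plaques, applied to $f^{-1}$), and full-support conditionals do force $\operatorname{supp}(\mu)$ to be locally a union of entire plaques. So the analytic core of your proof is sound: it is conceptually cleaner, at the price of importing Gibbs-state machinery where Zhang's argument is self-contained measure theory.

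There is, however, one substantive defect and one technical one. First, your opening claim that we ``may take $\Lambda=\operatorname{supp}(\mu)$'' is not a harmless normalization. If ``supporting an acip'' is read as $\mu(\Lambda)=1$, as the quoted statement suggests, then bi-saturation of $\operatorname{supp}(\mu)$ says nothing about points of $\Lambda\setminus\operatorname{supp}(\mu)$, and for such points the conclusion is simply false. Indeed, on $\mathbb{T}^2\times S^1$ let $f=A\times g$, where $A$ is a linear Anosov map with fixed point $q_0$ and $g$ is a circle diffeomorphism $C^1$-close to the identity whose fixed point set is a closed arc $J$ together with an isolated point $t_1\notin J$; then $\mu=m_{\mathbb{T}^2}\times\operatorname{Leb}_J$ (normalized) is an acip, $\Lambda=(\mathbb{T}^2\times J)\cup\{(q_0,t_1)\}$ is a compact invariant strongly partially hyperbolic set with $\mu(\Lambda)=1$, yet the strong stable manifold $W^s(q_0,t_1)=W^s_A(q_0)\times\{t_1\}$ is not contained in $\Lambda$. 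What Zhang proves, and what your argument actually establishes, is that $\operatorname{supp}(\mu)$ is bi-saturated; the form quoted in the paper overstates this, and your write-up should have flagged that your ``reduction'' is really a correction of the statement (harmless for the paper's applications: in Lemma~\ref{L1} the relevant point can be chosen in the full-measure set $\Lambda\cap\mathcal{X}^s\cap\operatorname{supp}(\mu)$). Second, and more minor: $f$ is assumed partially hyperbolic only on $\Lambda$, so $\mathcal{F}^s$ is a lamination through points of $\Lambda$, not a foliation of a neighborhood; your foliation boxes $B\cong D^s\times T$, the absolute continuity statement, and the distortion estimates (with tangent planes of plaques standing in for $E^s$ off $\Lambda$) must all be run in the lamination category. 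This is routine, but as written your proof presupposes structure that the hypotheses do not provide.
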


We remark that Zhang used this result to prove that essential accessibility implies weak ergodicity, when f supports some acip.\\

\begin{lemma} \label{L1}  Let $f$ be a $C^{1+\alpha}$, ms-minimal partially hyperbolic diffeomorphism. If $\Lambda \subset M$  is a compact $m$ invariant set with $m(\Lambda)>0$, then $\Lambda=M$.\end{lemma}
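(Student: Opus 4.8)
The plan is to show that a compact, $m$-invariant set $\Lambda$ of positive measure must be all of $M$, using the $ms$-minimality to propagate positivity of measure along strong stable leaves. The key observation is that since $f$ is $C^{1+\alpha}$, the strong stable foliation $\SF^s$ is absolutely continuous, so the interaction between the measure $m$ and the leaves of $\SF^s$ is controlled by Fubini-type disintegration arguments in the spirit of Pesin theory.

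First I would argue that because $m(\Lambda)>0$ and $\Lambda$ is $m$-invariant, the set $\mathcal{X}^s(f)\cap\Lambda$ still has positive measure, since $m(\mathcal{X}^s(f))=1$. By absolute continuity of $\SF^s$, for $m$-almost every point the conditional measure along its strong stable leaf is equivalent to the Riemannian volume on that leaf; hence there exists a point $x\in\Lambda$ whose strong stable leaf $\SF^s(x)$ meets $\Lambda$ in a subset of full leaf-measure. The crucial step is to upgrade this to a statement about the \emph{whole} leaf: I would like to conclude that $\SF^s(x)\subset\Lambda$, or at least that $\SF^s(x)$ is essentially contained in $\Lambda$. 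This is exactly where I expect to invoke that $\Lambda$ is compact and $m$-invariant together with the contraction along $\SF^s$ — iterating forward by $f$ contracts strong stable plaques, and invariance of $\Lambda$ under $f$ should force leaves through density points of $\Lambda$ to lie in $\Lambda$.

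Next, once I have a strong stable leaf $\SF^s(x)$ contained in $\Lambda$ with $x\in\mathcal{X}^s(f)$, I use the defining property of $\mathcal{X}^s(f)$: the leaf $\SF^s(x)$ is dense in $M$. Since $\Lambda$ is closed and contains the dense set $\SF^s(x)$, it follows immediately that $\Lambda=\overline{\SF^s(x)}=M$, which is the desired conclusion. The logical skeleton is therefore: positive measure $\Rightarrow$ a full-leaf-measure intersection with some strong stable leaf $\Rightarrow$ that leaf lies in $\Lambda$ $\Rightarrow$ that leaf is dense $\Rightarrow$ $\Lambda=M$.

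The main obstacle will be the middle implication, namely rigorously passing from ``$\Lambda$ meets $\SF^s(x)$ in full leaf-measure'' to ``$\SF^s(x)\subset\Lambda$.'' Here I anticipate two possible routes. One is a direct Pesin-style density-point argument: absolute continuity gives that for a.e.\ $x$, Lebesgue density points of $\Lambda$ in $M$ correspond to leaf-density points of $\Lambda\cap\SF^s(x)$, and then the forward contraction of strong stable plaques under $f^n$ combined with invariance of $\Lambda$ spreads the full-density condition across the entire (contracting) leaf. Alternatively, and more cleanly, one can bypass the issue entirely by appealing to Zhang's Theorem~\ref{zhang}: the restriction of $m$ to $\Lambda$ (normalized) is an acip supported on the partially hyperbolic set $\Lambda$, so $\Lambda$ is bi-saturated, meaning the global strong stable manifold of each of its points lies in $\Lambda$. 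Taking any $x\in\mathcal{X}^s(f)\cap\Lambda$, bi-saturation yields $\SF^s(x)\subset\Lambda$, and density of $\SF^s(x)$ together with compactness of $\Lambda$ then gives $\Lambda=M$. I would present the Zhang route as the primary argument, since it makes the saturation step a black box and isolates the only genuinely new input as the combination of $ms$-minimality with bi-saturation.
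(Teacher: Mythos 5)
Your primary (Zhang) route is exactly the paper's own proof: invoke Theorem~\ref{zhang} to get bi-saturation of $\Lambda$ (the normalized restriction $m|_\Lambda$ being an acip supported on $\Lambda$), use $ms$-minimality to find $x\in\Lambda\cap\mathcal{X}^s(f)$, and conclude from $\mathcal{F}^s(x)\subset\Lambda$, density of $\mathcal{F}^s(x)$, and closedness of $\Lambda$ that $\Lambda=M$. The proposal is correct and takes essentially the same approach; the alternative Pesin-style density-point argument you sketch is not needed.
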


\begin{proof}  Let $\Lambda$ be a compact invariant set with positive Lebesgue measure. By Theorem \ref{zhang}, $\Lambda$ is bi-saturated. Since $f$ is ms-minimal, we have that $m(\Lambda \cap \mathcal{X}^s) = m(\Lambda) > 0$. In particular, $\Lambda \cap \mathcal{X}^s$ is non-empty. For any $x \in \Lambda \cap \mathcal{X}^s$, we have that $\mathcal{F}^s(x) \subset \Lambda$ and $cl(\mathcal{F}^s(x))=M$. Since $\Lambda$ is closed, we get that $\Lambda = M$.
\end{proof}

As a consequence we obtain a criterion to show minimality.

\begin{proposition} \label{C1} Let $f$ be a $C^{1+\alpha}$, ms-minimal partially hyperbolic diffeomorphism. If $\mathcal{X}^s$ admits some 
compact invariant subset $\Lambda$ with positive measure, then $f$ is $ms$-minimal.\end{proposition}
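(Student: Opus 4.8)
The plan is to read this as a near-immediate corollary of Lemma \ref{L1}. By hypothesis $f$ is $C^{1+\alpha}$ and $ms$-minimal, and there is a compact invariant set $\Lambda \subset \mathcal{X}^s$ with $m(\Lambda) > 0$. The only real work is to convert the conclusion of Lemma \ref{L1} into a statement about \emph{every} leaf being dense.

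First I would apply Lemma \ref{L1} to $\Lambda$: since $\Lambda$ is a compact $m$-invariant set of positive Lebesgue measure and $f$ is $C^{1+\alpha}$ and $ms$-minimal, that lemma gives $\Lambda = M$. Next I would exploit the inclusion $\Lambda \subset \mathcal{X}^s$ supplied by the hypothesis: combining it with $\Lambda = M$ yields $M = \Lambda \subset \mathcal{X}^s \subset M$, and hence $\mathcal{X}^s = M$. By the definition of $\mathcal{X}^s$ this says that $\mathcal{F}^s(x)$ is dense in $M$ for every $x \in M$, i.e. the strong stable foliation $\mathcal{F}^s$ is minimal and $f$ is $s$-minimal. (I interpret the printed conclusion ``$f$ is $ms$-minimal'' as a typographical slip for ``$f$ is $s$-minimal''; otherwise the statement would merely repeat its own hypothesis, whereas the surrounding text advertises this precisely as a criterion for promoting $m$-minimality to genuine minimality.)

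I do not expect any genuine obstacle here, because all of the analytic machinery — Zhang's bi-saturation theorem (Theorem \ref{zhang}) together with the absolute continuity of the stable holonomy — has already been packaged into Lemma \ref{L1}. The entire content of the proposition is then the elementary set-theoretic step that a positive-measure compact invariant piece sitting inside $\mathcal{X}^s$ is forced by Lemma \ref{L1} to be all of $M$, so that the strong stable leaf through \emph{every} point is dense. The one thing worth stating carefully is that the hypothesis ``$\mathcal{X}^s$ admits a compact invariant subset of positive measure'' feeds verbatim into Lemma \ref{L1}, since such a subset is in particular a compact invariant set of positive Lebesgue measure in $M$.
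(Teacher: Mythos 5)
Your proposal is correct and follows exactly the paper's own argument: apply Lemma \ref{L1} to get $\Lambda = M$, then use $\Lambda \subset \mathcal{X}^s$ to conclude $\mathcal{X}^s = M$, i.e.\ every strong stable leaf is dense. Your reading of the stated conclusion ``$ms$-minimal'' as a typographical slip for ``$s$-minimal'' (minimality of $\mathcal{F}^s$) is also the right one, since the paper introduces this proposition precisely as ``a criterion to show minimality'' and its proof ends with $\mathcal{X}^s = M$.
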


\begin{proof} By  Lemma \ref{L1}, we have that $\Lambda = M$, and since $\Lambda \subset \mathcal{X}^s$, we conclude that $\mathcal{X}^s = M$ , which means that $f$ is $ms$-minimal. \end{proof}

Finally, we recast weak ergodicity.

\begin{theorem} \label{T2} Every $C^{1+\alpha}$ partially hyperbolic diffeomorphism  $f$  that is $ms$-minimal is
weakly ergodic.\end{theorem}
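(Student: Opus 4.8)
The plan is to deduce weak ergodicity from Lemma \ref{L1} by showing that, for each fixed open set, the set of points whose forward orbit avoids it has zero measure. Recall that weak ergodicity asks that the forward orbit of $m$-almost every point be dense. Fixing a countable basis $\{U_j\}_{j\in\N}$ of open sets of $M$, a point has dense forward orbit if and only if its forward orbit meets every $U_j$. Since a countable intersection of full measure sets has full measure, it suffices to prove that for each fixed $j$ the set
$$G_j=\bigcup_{n\geq 0} f^{-n}(U_j)$$
of points whose forward orbit enters $U_j$ satisfies $m(G_j)=1$.

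Fix $j$ and write $U=U_j$. I would consider the complement $A=M\setminus G_j$, the set of points whose entire forward orbit avoids $U$. Since $f$ is continuous, $G_j$ is open, so $A$ is closed and hence compact; moreover $A$ is forward invariant, $f(A)\subseteq A$, because if the forward orbit of $x$ misses $U$ then so does the forward orbit of $f(x)$. The sequence of compact sets $f^n(A)$ is therefore decreasing, and I set $\Lambda=\bigcap_{n\geq 0} f^n(A)$. A direct check shows $f(\Lambda)=\Lambda$, so $\Lambda$ is a compact invariant set. Because $f$ preserves $m$, each $f^n(A)$ has the same measure as $A$, and continuity of $m$ along the decreasing family yields $m(\Lambda)=m(A)$.

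Now suppose, for contradiction, that $m(G_j)<1$, i.e. $m(A)>0$. Then $\Lambda$ is a compact invariant set with $m(\Lambda)=m(A)>0$, so Lemma \ref{L1} applies and forces $\Lambda=M$. This is absurd, since $\Lambda\subseteq A\subseteq M\setminus U$ while $U$ is a nonempty open set. Hence $m(A)=0$ and $m(G_j)=1$. Intersecting over all $j$ produces a full measure set whose points have forward orbit meeting every basic open set, that is, dense forward orbit, which is exactly weak ergodicity.

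The arguments above are essentially bookkeeping once the right object is identified; the one point deserving care is the passage from the merely forward invariant compact set $A$ to a genuinely invariant set $\Lambda$ of the same measure, which is precisely where preservation of $m$ by $f$ (giving $m(f^n(A))=m(A)$) is essential. All the analytic content is hidden inside Lemma \ref{L1}, and hence inside Zhang's bi-saturation theorem, Theorem \ref{zhang}; that invocation is the real engine of the proof, while everything else is a standard reduction to a countable basis together with the construction of a maximal invariant subset.
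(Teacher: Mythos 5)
Your proof is correct and takes essentially the same route as the paper: fix a countable basis, use Lemma \ref{L1} to show that the set of points whose orbit avoids a given basic open set has measure zero, and intersect over the basis. The only difference is your careful passage from the merely forward-invariant set $A$ to the genuinely invariant set $\Lambda=\bigcap_{n\geq 0}f^n(A)$ of the same measure; the paper skips this by declaring its sets $A_k$ to be $f$-invariant outright (which is accurate when the orbit in their definition is the full two-sided orbit), so your version is just a slightly more scrupulous rendering of the identical argument.
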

\begin{proof}   Let $\{U_n\}_{n\in \mathbb{N}}$ be a base of the topology of $M$. For a fixed $k \in \mathbb{N}$, consider the set $A_k = \{x \in M \ | \ \mathcal{O}(x) \cap U_k = \emptyset\}$. The sets $A_k$'s are closed and $f$-invariant. Clearly, $m(A_k) < 1$, since $A_k \cap U_k = \emptyset$. By Lemma \ref{L1}, we conclude that $m(A_k) = 0$ for every $k \in \mathbb{N}$. Hence the set $\bigcap_{k \in \mathbb{N}} A^c_k$ has full measure. By construction, the orbit of every point in this set passes trough every $U_n$, so it is a dense orbit. \end{proof}

Corollary \ref{C1} and  Theorem \ref{T2} have analogous versions for the $mu$-minimal case.

\subsection{The SH property}

Pujals and Sambarino introduced in \cite{PS} a property for partially hyperbolic diffeomorphisms which they call by SH. Roughly, this property says that there are points in any unstable large disks where the dynamics $f$ behaves as a hyperbolic one.    
Moreover, they proved that SH is a robust property. An amazing consequence of such property is that implies robustness of minimality of the strong foliation. 

We could ask if SH would also imply the robustness of $m$-minimality. We do not have an answer for that, yet. What we have is the following partial result, which has the same spirit of the result of Tahzibi in \cite{T}. 

\begin{definition} (Property SH) Let $ f$ be a partial hyperbolic diffeomorphism. We say that $f$ exhibits the property SH (or has the property SH) if there exist $\lambda>1$ and $C>0$ such that for any $x \in M$ there exists $y^u(x)\in \mathcal{F}_1^{uu}(x))$ (the ball of radius 1 in $\mathcal{F}^{uu}(x)$ centered at x) satisfying
$$
m\{Df^n_{|E^c(f^l(y^u(x)))}\}>C\lambda^n \text{ for any } n>0,\; l>0.
$$
Here, in the definition, $m(.)$ is the co-norm of the linear map. 
\end{definition}

\begin{proposition} Let $f$ be a volume preserving partially hyperbolic diffeomorphism $ms$-minimal having the SH property. Then given $\eps>0$, there exists a neighborhood $\mathcal{V}$ of $f$ such that $m(\mathcal{X}^s(g))>1-\eps$ for every $g\in \mathcal{V}$. 
\label{Shprop}\end{proposition}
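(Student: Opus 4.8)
The plan is to combine the SH property, which gives a robust supply of ``hyperbolic'' center directions inside every large unstable disc, with the $ms$-minimality of $f$ and the openness machinery already developed in Lemma \ref{l.aberto}. The key point is that SH is a \emph{robust} property (this is the theorem of Pujals and Sambarino in \cite{PS}): there is a neighborhood $\mathcal{V}_0$ of $f$ and uniform constants $\lambda>1$, $C>0$ so that every $g\in\mathcal{V}_0$ still exhibits SH with these same constants. Thus for every $g\in\mathcal{V}_0$ and every $x\in M$ there is a point $y^u(x)\in\mathcal{F}_1^{uu}(x)$ whose forward center exponents are uniformly positive along the whole forward orbit. The philosophy, in the spirit of Tahzibi \cite{T}, is that such a point behaves like a hyperbolic point: its strong stable leaf and its center-unstable behavior force a large portion of nearby stable leaves to spread out densely.

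First I would fix $\eps>0$ and choose $\delta>0$ so that, as in the topological mixing proof (Proposition \ref{a.e.implies.mixing}), it suffices to control the measure of the set $\mathcal{X}^s_{\delta}(g)$ of points whose strong stable leaf is $\delta$-dense. Using that $f$ is $ms$-minimal, $m(\mathcal{X}^s(f))=1$, so by the open cover argument of Lemma \ref{l.aberto} (Vitali covering plus continuity of compact pieces of leaves) we can find $K>0$ and an open set $W$ with $m(W)>1-\tfrac{\eps}{2}$ such that $\mathcal{F}^s_K(g,y)$ is $\delta$-dense for every $y\in W$ and every $g$ in some neighborhood $\mathcal{V}_1\subset\mathcal{V}_0$ of $f$. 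So the uniformly $\delta$-dense stable leaves persist on a set of measure close to one after perturbation; what must be shown is that this persistence propagates to \emph{almost every} point, not merely to a set of measure $1-\tfrac{\eps}{2}$ that might shrink.

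Here is where SH enters. For $g\in\mathcal{V}_1$, take any point $x\in M$ and its associated SH point $y^u(x)\in\mathcal{F}_1^{uu}(x)$ with uniformly expanding center. The uniform center expansion, together with the uniform contraction on $E^s$, lets one contract the strong stable leaf of $f^l(y^u(x))$ under forward iteration inside a genuinely hyperbolic block, so that some forward iterate of the local stable disc through a nearby point lands inside $W$, where it is $\delta$-dense; pulling this density back along the strong stable foliation (which is $g$-invariant and whose leaves only contract) shows that the original point also has a $\delta$-dense stable leaf. Quantitatively, I would argue that the SH mechanism makes the ``bad'' set $M\setminus\mathcal{X}^s_{\delta}(g)$ propagate its own density under $g^n$ while its measure is preserved, so that combining with $m(W)>1-\tfrac{\eps}{2}$ forces $m(\mathcal{X}^s_{\delta}(g))>1-\eps$. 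Intersecting over a countable sequence $\delta=\delta_j\to 0$ (with the neighborhoods $\mathcal{V}_1=\mathcal{V}_1(\delta_j)$ shrinking appropriately, but all contained in a single final $\mathcal{V}$) then yields the conclusion.

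The main obstacle I expect is precisely the propagation step: turning the pointwise, orbitwise hyperbolicity supplied by SH into a \emph{measure} estimate that is uniform across the neighborhood $\mathcal{V}$. The subtlety is that SH only guarantees \emph{one} hyperbolic point per unit unstable disc, so one must leverage the partial hyperbolicity (uniform $N_0$ after which an $\eps$-unstable disc swells to a $K$-disc, exactly as in Proposition \ref{a.e.implies.mixing}) together with volume preservation to convert a single good point into a positive-and-controlled-measure set of points with $\delta$-dense stable leaves, and then to show this lower bound does not degrade below $1-\eps$ under perturbation. Making the constants $K$, $N_0$, and the measure loss uniform over $g\in\mathcal{V}$ — rather than merely holding at $f$ — is the technical heart, and it is what distinguishes this robustness of $m$-minimality from the (open) statement already proved in Lemma \ref{l.aberto}.
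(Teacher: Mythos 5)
Your proposal assembles the right ingredients (robustness of SH, the openness argument of Lemma \ref{l.aberto}, volume preservation and recurrence), but the two steps that would have to carry the proof both fail as stated. First, the pull-back step: if $g^n(z)$ lies in your set $W$, so that $\mathcal{F}^s(g^n(z))$ is $\delta$-dense, you cannot conclude that $\mathcal{F}^s(z)=g^{-n}\bigl(\mathcal{F}^s(g^n(z))\bigr)$ is $\delta$-dense. Density at a fixed scale is not preserved by $g^{-n}$: the image of a $\delta$-dense set under $g^{-n}$ is only dense at a scale governed by the modulus of continuity of $g^{-n}$, which degenerates as $n\to\infty$. Only \emph{full} density is preserved under homeomorphisms, and full density is exactly what you do not yet have on $W$. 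Second, the closing step (intersecting over $\delta_j\to 0$ with neighborhoods $\mathcal{V}_1(\delta_j)$ ``all contained in a single final $\mathcal{V}$'') is not available: Lemma \ref{l.aberto} produces a neighborhood for each fixed $\delta_j$, these neighborhoods shrink as $\delta_j\to 0$, and nothing guarantees that their intersection contains a neighborhood of $f$. If that scheme worked, the SH hypothesis would be superfluous and $m$-minimality would be a $C^1$-open property, whereas the paper only proves it is a $G_\delta$ property (Proposition \ref{prop.5.1}); the whole point of SH is to break exactly this impasse. You correctly flag this propagation step as the ``technical heart,'' but the proposal does not supply it.

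The missing idea is that SH upgrades density at a \emph{single} scale $\delta$ to full density dynamically, by backward iteration, rather than topologically by letting $\delta\to 0$. The paper fixes one small $\delta$ and one neighborhood $\mathcal{V}\subset\mathcal{B}^{s}_{m}(\eps,\delta)$ (see (\ref{d.B.s})) on which SH persists, and then shows that almost every point of $\mathcal{X}^s_{\delta}(g)$ already lies in $\mathcal{X}^s(g)$. Concretely, the Pujals--Sambarino mechanism gives that for any open set $U$ the forward iterates $g^k(U)$ eventually contain center-unstable discs of uniform size (this is where the center expansion at the SH points is used), and any $\delta$-dense strong stable leaf, with $\delta$ small compared to that uniform size, must cross such a disc by the local product structure; equivalently, $\mathcal{F}^s(g,g^{-k}(x))$ meets $U$ for every $x\in\mathcal{X}^s_{\delta}(g)$ and every large $k$. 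Then Poincar\'e recurrence applied to the open set $\mathcal{X}^s_{\delta}(g)$ (which has measure $>1-\eps$ but is \emph{not} invariant) writes almost every $x\in\mathcal{X}^s_{\delta}(g)$ as $x=g^{-n_k}(y_k)$ with $y_k\in\mathcal{X}^s_{\delta}(g)$ and $n_k$ arbitrarily large; running over a countable basis of open sets then shows $\mathcal{F}^s(x)$ is dense. Hence $m(\mathcal{X}^s(g))\geq m(\mathcal{X}^s_{\delta}(g))>1-\eps$ for every $g\in\mathcal{V}$, with a single $\delta$ and a single neighborhood. Note the direction of iteration: density of stable leaves improves under \emph{backward} iteration (which expands them, with SH controlling the geometry), which is the opposite of the forward-iteration pull-back you attempt.
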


{\it Proof:} Since $f$ is $ms$-minimal we can choose a small neighborhood $\mathcal{V}$ of $f$ inside $\mathcal{B}(\eps,\delta)$, for $\delta>0$ arbitrary small. See (\ref{d.B.s}) in the proof of Proposition \ref{prop.5.1} to recall the definition of $\mathcal{B}(\eps,\delta)$. Moreover, we can suppose that every diffeomorphisms in $\mathcal{U}$ has SH property, by robustness of such property. Hence, if $g\in \mathcal{U}$ and $\delta>0$ is small enough, given any open set $U$, we can use property SH as in \cite{PS} to prove that  $\mathcal{F}^s(g, g^{-k}(x))$ intersects $U$ for any large positive integer $k$, and every $x\in \mathcal{X}^s_{\delta}(g)$. 

 Now, we have, by Poincaré Recurrence Theorem, that  almost every point in $\mathcal{X}^s_{\delta}(g)$ is recurrent. Also, since $\mathcal{X}^s_{\delta}(g)$ is an open set of $M$, for almost every point $x\in \mathcal{X}^s_{\delta}(g)$ there is arbitrary large positive integer $n_{k_x}$ such that $f^{n_{k_x}}(x)\in \mathcal{X}^s_{\delta}(g)$. Thus, using the first part of the proof we have $\mathcal{F}^s(x)$ must intersects $U$. Since this open set was taken arbitrary, we have just proved that almost every point $x$ in  $\mathcal{X}^s_{\delta}(g)$ also belongs to $\mathcal{X}^s(g)$. Which implies $m(\mathcal{X}^s(g))>1-\eps$, since $g\in \mathcal{B}(\eps,\delta)$.

$\hfill\square$

\section{Examples and Some Questions}

In this section, we discuss some relations between the notions introduced above.

\subsection{Weak ergodicity do not imply $m$-minimality}

Let $A$ be a linear Anosov diffeomorphism on the torus $\T^2$ and $R$ be a irrational rotation on the circle. We consider $f=A\times R$ a volume preserving partially hyperbolic diffeomorphism on $\T^3$.

Since the Lebesgue measure on $\T^2$ is mixing for $A$ and the Lebesgue measure of the circle is ergodic for $R$ we have that $f$ is ergodic with respect to the Lebesgue measure of $\T^3$. In particular, it is weakly ergodic.

However, since the irrational rotation is not topologically mixing then $f$ is not topologically mixing. Indeed, take $U$ and $V$ two open sets of the circle and consider $\T^2\times U$ and $\T^2\times V$. Our result shows then that $f$ is not $ms$ or $mu$-minimal. This can be seen directly, since the strong subbundles are tangent to $\T^2\times\{.\}$, then the strong invariant manifolds belong to these sets. Thus cannot be dense on $\T^3$.

However, a natural question is:  stronger ergodic properties of the Lebesgue measure implies $m$-minimality? We can ask for mixing measures, or even a Bernoulli measures. 

\subsection{Accessibility} 

As remarked before, it is possible that ergodicity and $m$-minimality could have some relations. However, another property which relates with ergodicity is accessibility. In fact, Pugh and Shub's conjecture (see \cite{RHRHTU}) says that accessibility implies ergodicity for $C^2$ partially hyperbolic volume preserving diffeomorphisms.

A partially hyperbolic diffeomorphism is accessible if any two points can be joined by a concatenation of curves belonging to unstable or stable manifolds. Actually, this property splits the manifold in accessibility classes. We say that a system is essentially accessible if any union of accessibility classes has zero or full measure.

We remark that the strong invariant manifolds of a linear Anosov in dimension 3 is minimal (actually the center manifold is also minimal). However, a linear Anosov is not essentially accessible, since the strong stable and unstable directions are jointly integrable, see \cite{RH} for more details on other automorphisms of the torus.

So, we have the following question. Accessibility implies $m$-minimality for partially hyperbolic diffeomorphisms?

\subsection{Foliations}

In the introduction, we mentioned that the orbit foliation of the suspension of an ergodic diffeomorphism is $m$-minimal. But it is not $\mu$-minimal for many invariant measures. Take the Dirac measure of a periodic orbit for instance. 

It is natural to study foliations which are $\mu$-minimal for a large set of measures on the manifold. We think that this study could have interest on its own.

\section{The abundance of $m$-minimallity}

\subsection{A criterion to see density of the strong leafs}\label{sub1}

In this section we obtain information about the strong stable and unstable leafs of points in the manifold containing hyperbolic periodic points in their $\omega-$limit or $\alpha-$limit sets. Recall that  the {\it $\omega-$limit (resp. $\alpha-$limit) set of $x$, $\omega(x)$ (resp. $\alpha(x)$), } is the set of points $y$ in $M$ such that there exists a sequence of forward  (resp. backward ) iterates of $x$ converging to $y$. 

What follows is our criterion to show $\delta$-density of strong leafs of a partially hyperbolic diffeomorphisms. 
In the results bellow we set only the case of strong stable leafs. However, there are similar results for the strong unstable leafs, which can be obtained by considering $f^{-1}$.

\begin{proposition}
Let $f$ be a $C^1$ partially hyperbolic diffeomorphism with splitting  $TM=E^s\oplus E^c\oplus E^u$, having a periodic point $p$ with period $\tau(p)$. Given $\delta>0$, if: 
\begin{itemize}
\item[a)] $\mathcal{F}^s(f^j(p))$ is $\delta$-dense in $M$, for any $0\leq j<\tau(p)$;

\item[b)] For any small enough neighborhood $V$ of $p$, there exist a submanifold $D\subset V$ containing $p$ which integrates $E^c\oplus E^u$, i.e. $T_DM=E^c\oplus E^u$, such that $f^{-\tau(p)}(D)\subset D$; 
\end{itemize}
Then, for any $x\in M$ such that $p\in \omega(x)$ we have $\mathcal{F}^s(x)$ is also $\delta$-dense in $M$.
\label{criterion.density}\end{proposition}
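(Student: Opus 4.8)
The plan is to transfer the $\delta$-density of the stable leaves along the orbit of $p$ to the leaf of $x$, exploiting that the backward-invariant center--unstable disk $D$ of hypothesis (b) confines the relevant pull-backs to a small neighborhood of the orbit. The guiding identity is the invariance $f^{n}(\mathcal{F}^s(x))=\mathcal{F}^s(f^n(x))$ of the strong stable foliation.

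First I would reduce to the case where $x$ returns close to $p$ along times that are multiples of the period. Since $p\in\omega(x)$, there is $n_k\to\infty$ with $f^{n_k}(x)\to p$; passing to a subsequence with constant residue $r$ modulo $\tau(p)$ and replacing $p$ by the orbit point $f^{\tau(p)-r}(p)$ (which still satisfies (a) and (b), because $f^{j}(D)$ is again a backward-invariant disk integrating $E^c\oplus E^u$ through $f^{j}(p)$), I may assume that $n_k$ is a multiple of $\tau(p)$ while still $f^{n_k}(x)\to p$.

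Next, fix a small neighborhood $V$ of $p$ together with a disk $D\subset V$ as in (b), and take $k$ large so that $y_k:=f^{n_k}(x)\in V$. As $D$ is tangent to $E^c\oplus E^u$ and hence transverse to $\mathcal{F}^s$, the local product structure near $p$ lets me project $y_k$ along its local stable plaque to a point $q_k:=\mathcal{F}^s_{\mathrm{loc}}(y_k)\cap D\in D$, so that $\mathcal{F}^s(q_k)=\mathcal{F}^s(y_k)=f^{n_k}(\mathcal{F}^s(x))$. Applying $f^{-n_k}$ and using invariance of the stable foliation gives $\mathcal{F}^s(x)=\mathcal{F}^s\big(f^{-n_k}(q_k)\big)$. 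Here is the crux: since $n_k$ is a multiple of $\tau(p)$ and $f^{-\tau(p)}(D)\subset D$, the point $q_k':=f^{-n_k}(q_k)$ remains in $D\subset V$. Thus $\mathcal{F}^s(x)$ is the stable leaf through a point $q_k'$ lying within $\operatorname{diam}(V)$ of $p$; as $V$ may be taken arbitrarily small, $\mathcal{F}^s(x)$ passes through points arbitrarily close to $p$.

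Finally I would conclude by continuity of the foliation. Fix $\eta>0$; by compactness of $M$ and hypothesis (a) there is a radius $K$ for which the compact disk $\mathcal{F}^s_K(p)$ is already $(\delta+\eta)$-dense. Since $\mathcal{F}^s_K$ depends continuously on its base point, for $q_k'$ close enough to $p$ the disk $\mathcal{F}^s_K(q_k')$ is Hausdorff-close to $\mathcal{F}^s_K(p)$, hence $(\delta+2\eta)$-dense; because $\mathcal{F}^s_K(q_k')\subset\mathcal{F}^s(x)$ and $q_k'$ can be taken arbitrarily close to $p$, the leaf $\mathcal{F}^s(x)$ is $(\delta+2\eta)$-dense, and letting $\eta\to 0$ shows it is $\delta$-dense. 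The main obstacle is that the pull-back $f^{-n_k}$ a priori expands the stable direction and could destroy any density estimate; this is precisely what hypothesis (b) resolves, since the backward invariance of $D$ keeps $f^{-n_k}(q_k)$ trapped near the orbit of $p$ instead of letting it escape.
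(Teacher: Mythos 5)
Your proof is correct and follows essentially the same strategy as the paper's: intersect the local stable plaque of a return point $f^{n_k}(x)$ with the backward-invariant disk $D$, pull the intersection point back into $V$ using $f^{-\tau(p)}(D)\subset D$, and conclude via hypothesis (a) together with the continuity of compact parts of strong stable leaves. The only differences are cosmetic: you normalize the return times to multiples of $\tau(p)$ by replacing $p$ with an orbit point, where the paper keeps $p$ and carries the shift $j$ through the argument, and you phrase the final density step with a uniform $(\delta+2\eta)$-dense compact disk and $\eta\to 0$, where the paper fixes an arbitrary open set $U$ of diameter larger than $\delta$ and shows the leaf meets it.
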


\begin{remark}
Recalling that the {\it index} of a hyperbolic periodic point is its stable bundle  dimension, we remark that every hyperbolic periodic point $p$ of a partially hyperbolic diffeomorphism $f$ having index equal to the dimension of the strong stable bundle  of $f$, satisfies  condition $(b)$ of the previous proposition. 
\label{rmk.criterion}\end{remark}

Before we prove Proposition \ref{criterion.density}, we use it and the previous remark to obtain a criterion to see density of strong stable leafs.

\begin{proposition}
Let $f$ be a $C^1$ partially hyperbolic diffeomorphism with splitting  $TM=E^s\oplus E^c\oplus E^u$,  having a hyperbolic periodic point $p$ with index $s$, where $s=dim\ E^s$, such that $\mathcal{F}^s(p)$ is dense in $M$. Thus, if $x\in M$ is such that $p\in \omega(x)$ then $\mathcal{F}^s(x)$ is dense in $M$.
\label{PPP1}\end{proposition}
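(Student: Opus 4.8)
The plan is to deduce Proposition \ref{PPP1} directly from Proposition \ref{criterion.density} by letting $\delta$ run to $0$. The bridge is the elementary observation that a subset $A\subset M$ is dense if and only if it is $\delta$-dense for every $\delta>0$: density clearly implies $\delta$-density for all $\delta$, and conversely, given any ball $B(x,r)$ one applies $\delta$-density with, say, $\delta=r$ (the ball has diameter $2r>\delta$) to meet $B(x,r)$. Thus it suffices to show that, for each fixed $\delta>0$, the two hypotheses of Proposition \ref{criterion.density} hold; the conclusion of that proposition then gives that $\mathcal{F}^s(x)$ is $\delta$-dense whenever $p\in\omega(x)$, and letting $\delta\to 0$ upgrades this to density.

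First I would check hypothesis (a) for every $\delta>0$. The strong stable foliation is $f$-invariant, so $f^j(\mathcal{F}^s(p))=\mathcal{F}^s(f^j(p))$ for all $j$. Since $f$ is a homeomorphism and $\mathcal{F}^s(p)$ is dense by assumption, each image $\mathcal{F}^s(f^j(p))$ is dense in $M$, and in particular $\delta$-dense for every $\delta>0$ and every $0\leq j<\tau(p)$. Hence (a) is satisfied, uniformly in $\delta$.

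Next, hypothesis (b) is exactly the content of Remark \ref{rmk.criterion}: because $p$ is hyperbolic of index $s=\dim E^s$, the $s$-dimensional stable eigenspace of $Df^{\tau(p)}_p$ must coincide with $E^s(p)$ (both are $s$-dimensional and $E^s$ is uniformly contracted, so domination forces the match), whence $E^c(p)\oplus E^u(p)$ is precisely the complementary unstable eigenspace. The local unstable manifold $W^u_{\mathrm{loc}}(p)$ then provides, inside any small neighborhood $V$ of $p$, a disc $D$ through $p$ tangent to $E^c\oplus E^u$ with $f^{-\tau(p)}(D)\subset D$, which is (b).

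With both hypotheses verified for arbitrary $\delta>0$, Proposition \ref{criterion.density} yields that $\mathcal{F}^s(x)$ is $\delta$-dense for every $x$ with $p\in\omega(x)$; intersecting over a sequence $\delta_n\to 0$ gives density of $\mathcal{F}^s(x)$, as claimed. I expect no serious obstacle here, since the substantive work is carried by Proposition \ref{criterion.density} and Remark \ref{rmk.criterion}; the only point requiring care is that condition (a) must hold simultaneously for all $\delta$, which is immediate because density of each $\mathcal{F}^s(f^j(p))$ already supplies $\delta$-density for all $\delta$ at once.
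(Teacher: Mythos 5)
Your proposal is correct and follows essentially the same route as the paper: both verify hypothesis (a) of Proposition \ref{criterion.density} via the $f$-invariance of the strong stable foliation (density of $\mathcal{F}^s(p)$ passing to all $\mathcal{F}^s(f^j(p))$), invoke Remark \ref{rmk.criterion} for hypothesis (b), and then apply Proposition \ref{criterion.density} for every $\delta>0$ to conclude density. Your write-up merely makes explicit two points the paper leaves implicit, namely the equivalence ``dense $\Leftrightarrow$ $\delta$-dense for all $\delta$'' and the identification of the invariant disc $D$ with $W^u_{\mathrm{loc}}(p)$.
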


{\it Proof: } Since $f$ is a diffeomorphism implies that $f^j(p)$ is dense in $M$ for any integer $j$. In particular, item (a) of Proposition \ref{criterion.density} is satisfied for any $\delta>0$. Therefore, since item (b) of such proposition is also true by  Remark \ref{rmk.criterion}, we have that $\mathcal{F}^s(x)$ is dense in $M$. 

$\hfill\square$  

Let we prove now Proposition \ref{criterion.density}.

{\it Proof of Proposition \ref{criterion.density}}
Let $p$ be the periodic point of $f$ satisfying items (a) and (b) in the hypothesis.. And let $x\in M$ different of $p$, such that $p\in \omega(x)$.  Hence, there exist positive integers $n_k$ converging to infinity when $k$ goes to infinity, such that $f^{n_k}(x)$ converges to $p$. Recalling that $\tau(p)$ is the period of $p$, it is not difficulty to see that there exists some $0\leq j<\tau(p)$ such that $n_k+j$ is a multiple of $\tau(p)$ for infinitely many positive integers $k$. Hence, replacing the sequence $(n_k)_{k\in\N}$ for a subsequence we can assume $n_k+j=m_k\tau(p)$ for any $k\in \N$.

Let $U\subset M$ be an arbitrary open set, containing a disk with diameter larger than $\delta$.

Using  item (a) of the properties satisfied for $p$, there exists $K>0$ such that the compact part $\mathcal{F}_K^s(f^j(p))$ of the strong stable leaf of $f^j(p)$ intersects $U$, for every $0\leq j<\tau(p)$. Moreover,  since the strong stable leafs varies continuously in compact parts there exists a neighborhood $V$ of $p$ such that  $\mathcal{F}_K^s(y)\cap U\neq \emptyset$ for every $y\in f^j(V)$, $0\leq j<\tau(p)$. 

Taking $V$ smaller, if necessary, let $D\subset V$ the sub manifold given by item (b) in the hypothesis. Moreover, as a consequence of the partial hyperbolicity  of $f$, we can take another small neighborhood  $\tilde{V}\subset V$ of $p$, such that  $\mathcal{F}_{loc}^s(y)$ intersects transversally $f^j(D)$ for any $y\in f^j(\tilde{V})$, $0\leq j<\tau(p)$.
In particular, by choice of $x$, there exists $k_0$ such that $\mathcal{F}^s(f^{n_{k_0}}(x))$ intersects transversally $D$ in a point $z_{k_0}$. 
Now, by choice of $D$, we have that $f^{-m_{k_0}\tau(p)}(z_{k_0})\in f^j(D)\subset f^j(V)$.  
Which implies that $\mathcal{F}^s(f^{-m_{k_0}\tau(p)}(z_{k_0}))$ intersects $U$. Thus, if we observe that by choice of $j$ we have that $x\in \mathcal{F}^s(f^{-m_{k_0}\tau(p)}(z_{k_0}))$, then we conclude $\mathcal{F}^s(x)$ intersects $U$. And since $U$ was taken arbitrary we have just finished the proof of proposition. 
 
$\hfill\square$

\subsection{The existence of dense strong leafs:}\label{sub2}

The next result is a consequence of a standard application  of the Hayashi's connecting lemma applied to transitive dynamics.

  \begin{proposition}
Let $f\in \diff^1_{m}(M)$ (resp. $f\in \diff^1_{\omega}(M)$) be  partially hyperbolic, and $p$ be a hyperbolic periodic point (resp. either hyperbolic or $m$-elliptic periodic point).  Given a small enough neighborhood $\mathcal{U}$ of $f$ in $\diff^1_m(M)$ (resp. in $ \diff^1_{\omega}(M)$), there exists a dense subset $\mathcal{D}\subset \mathcal{U}$ formed by partially hyperbolic diffeomorphisms such that $\mathcal{F}^s(p(g),g)$ is dense in $M$ for every $g\in \mathcal{D}$. Where $p(g)$ denotes the analytic continuation of $p$ for $g$.
\label{prop.density}\end{proposition}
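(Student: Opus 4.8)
The plan is to realize the desired set $\mathcal{D}$ as a residual (hence dense) subset of $\mathcal{U}$, obtained by a Baire category argument fed by Hayashi's connecting lemma. Fix a countable basis $\{U_n\}_{n\in\N}$ for the topology of $M$, and for each $n$ set
$$\mathcal{O}_n=\{g\in\mathcal{U}\ :\ \text{some compact part of }\mathcal{F}^s(p(g),g)\text{ meets }U_n\}.$$
Shrinking $\mathcal{U}$ so that the hyperbolic (resp. $m$-elliptic) continuation $p(g)$ is defined for all $g\in\mathcal{U}$, and using that compact parts of strong stable leaves vary continuously with the diffeomorphism (exactly as in the proof of Lemma \ref{l.aberto}) together with the continuity of $g\mapsto p(g)$, each $\mathcal{O}_n$ is open. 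If I can show that every $\mathcal{O}_n$ is also dense in $\mathcal{U}$, then $\mathcal{D}:=\bigcap_n\mathcal{O}_n$ is residual, hence dense, and every $g\in\mathcal{D}$ has $\mathcal{F}^s(p(g),g)$ meeting each $U_n$, i.e.\ dense in $M$. Since partial hyperbolicity is a $C^1$-open condition, shrinking $\mathcal{U}$ further guarantees that all elements of $\mathcal{D}$ are partially hyperbolic.

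The heart of the matter is therefore the density of a single $\mathcal{O}_n$: given $g_0\in\mathcal{U}$, an open set $U=U_n$ and $\varepsilon>0$, I must produce $g$ that is $\varepsilon$-$C^1$-close to $g_0$, conservative (resp.\ symplectic), with $\mathcal{F}^s(p(g),g)\cap U\neq\emptyset$. The key observation is the equivalence
$$\mathcal{F}^s(p,g)\cap U\neq\emptyset \iff \exists\, w\in U \text{ and } N>0 \text{ with } g^{N}(w)\in W^{ss}_{loc}(p),$$
where $W^{ss}_{loc}(p)$ is a local strong stable disc at $p$; indeed $w\in g^{-N}(W^{ss}_{loc}(p))\subset\mathcal{F}^s(p,g)$ precisely when the forward orbit of $w$ is strong-stably asymptotic to the orbit of $p$. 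Thus it suffices to connect, by a genuine orbit, some point of $U$ to the submanifold $W^{ss}_{loc}(p)$.

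To arrange such a connection I would first note that, since $g_0$ preserves the full-support measure $m$, it admits no proper attractor (the basin of one would have strictly decreasing measure, contradicting invariance), so by Conley theory $M$ is a single chain-transitive class for $g_0$. In particular, for every $\eta>0$ there is a finite $\eta$-pseudo-orbit joining a point of $U$ to the periodic orbit of $p$. Feeding this pseudo-orbit into Hayashi's connecting lemma \cite{H}, in its volume-preserving (resp.\ symplectic) form, and performing the perturbation in a small tube around the connecting segment --- disjoint from $U$ and from a neighborhood of the orbit of $p$, so that both $U$ and $W^{ss}_{loc}(p(g))$ are left untouched --- I obtain $g$ that is $\varepsilon$-$C^1$-close to $g_0$, still conservative (resp.\ symplectic) and, by openness of partial hyperbolicity, still partially hyperbolic, together with a point $w\in U$ whose forward $g$-orbit lands on a prescribed point of $W^{ss}_{loc}(p(g))$. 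This gives $w\in\mathcal{F}^s(p(g),g)\cap U$, so $g\in\mathcal{O}_n$. This is exactly the standard use of the connecting lemma for transitive dynamics, in the spirit of \cite{BDU}.

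The main difficulty is concentrated in this last step: one needs the connecting lemma to hold within the conservative category (volume-preserving and symplectic), and one needs to steer the connected orbit precisely onto the positive-codimension disc $W^{ss}_{loc}(p)$ rather than merely into a neighborhood of $p$ --- which is why the landing point is prescribed on the leaf and the support of the perturbation is kept away from it. A secondary technical point, to be checked when shrinking $\mathcal{U}$, is the persistence of $p(g)$ as a periodic point carrying a well-defined local strong stable manifold; this is immediate for hyperbolic $p$ and must be handled with a little extra care for the $m$-elliptic points allowed in the symplectic statement.
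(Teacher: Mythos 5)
Your global structure (a countable basis $\{U_n\}$, open sets $\mathcal{O}_n$, Baire intersection) is exactly the paper's, and your openness argument for each $\mathcal{O}_n$ is fine. The genuine gap is in the density step. You correctly observe that conservativity forces chain transitivity (no proper attractor, hence a single chain class), so that a point of $U_n$ can be joined to the orbit of $p$ by an $\eta$-pseudo-orbit; but you then claim to ``feed this pseudo-orbit into Hayashi's connecting lemma'' to realize it as a true orbit. Hayashi's connecting lemma \cite{H} cannot do this: it connects two \emph{genuine} orbit segments that accumulate on a common non-periodic point, via a perturbation supported in a tower of small balls around that point. A pseudo-orbit has finitely many jumps; each jump would require its own local perturbation, and these perturbations interfere with one another and with the other orbit segments (a segment used for one jump may re-enter the support of the perturbation made at another jump). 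Resolving precisely this interference is the content of the connecting lemma for pseudo-orbits of Bonatti--Crovisier \cite{BC}, a much deeper theorem, which is moreover proved under additional (generic) hypotheses on periodic orbits and therefore cannot be applied to an arbitrary $g_0\in\mathcal{U}$ without a preliminary perturbation.

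This is exactly why the paper routes the density step differently: it first invokes the theorem --- whose proof \emph{is} the pseudo-orbit connecting lemma --- that transitive diffeomorphisms form a residual, hence dense, subset of $\diff^1_m(M)$ \cite{BC} (resp.\ of $\diff^1_{\omega}(M)$ \cite{ArBC}); then, for a transitive $g$ near $g_0$, a single dense orbit already accumulates on a non-periodic point $z$ of the local strong stable disc of $p(g)$ and passes through $U_n$, so only \emph{one} connection is needed and the Hayashi/Wen--Xia lemma applies, with support kept away from the forward orbit of $z$ and from the orbit of $p$. If you replace your pseudo-orbit step by this two-step argument (perturb to a generic, transitive diffeomorphism; then connect once), your proof becomes the paper's. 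A minor further point: $g^N(w)\in W^{ss}_{loc}(p)$ only gives $w\in\mathcal{F}^s(g^{-N}(p),g)$, the leaf of a point on the \emph{orbit} of $p$ rather than of $p$ itself, so you should take $N$ a multiple of the period of $p$ (or argue for each point of the orbit separately).
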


{\it Proof:} 

We can reduce $\mathcal{U}$, if necessary, such that is defined an analytic continuation to the hyperbolic periodic point $p$. If $p$ is a $m$-elliptic periodic point, then there also exists an analytic continuation for such point in the symplectic setting. Also, in this last case it is important to remark that $2m$ is a number smaller than center dimension of $f$.  

From now on in this proof we suppose  $p$ is hyperbolic and $f$ is volume preserving. For the other cases the proof is the same. Reducing $\mathcal{U}$ again, if necessary, we can suppose every diffemorphism in $\mathcal{U}$ is partially hyperbolic. 

Let $U_1,\ldots, U_n,\ldots$ be an enumerable basis of opens sets of $M$. We define  $\mathcal{B}^s_{m}\subset \mathcal{U}$ the subset of diffeomorphisms $g$ such that $g$ is partially hyperbolic and $\mathcal{F}^s(p(g),g)$ intersects $U_m$. By continuity of the strong stable foliation $\mathcal{B}^s_{m}$ is an open set inside $\mathcal{U}$. 

Let $\mathcal{R}\subset \diff^1_m(M)$ the residual subset given by \cite{BC} formed by transitive diffeomorphisms (in the symplectic setting this is prove in \cite{ArBC}). In particular, $\mathcal{R}$ is dense in $\mathcal{U}$. Let $g\in \mathcal{R}\cap \mathcal{U}$. Since $g$ is transitive, we can use the connecting lemma ( see XW )  to perturb $g$ and find a  partially hyperbolic diffeomorphism $\tilde{g}$ arbitrary close to $g$ such that $\mathcal{F}^s(p(\tilde{g}),\tilde{g})$ intersects $U_m$. Thus, we have that $\mathcal{B}^s_m$ is open and dense in $\mathcal{U}$. Since, $U_m$ was taken arbitrary, $ \mathcal{D}=\cap \mathcal{B}^s_m$ is a dense subset in $\mathcal{U}$ formed by diffeomorphisms satisfying  the thesis of the proposition.  

$\hfill\square$

\subsection{$m$-minimality in the conservative setting:}

In this section we prove Theorem \ref{mmc2}.  Before that, although we know that weakly ergodicity does not implies m-minimality, the following result says that weakly ergodicity implies $m$-minimality in some setting. 

\begin{theorem}
Let $f\in \diff^1_m(M)$ be weakly ergodic and partially hyperbolic with decomposition $TM=E^s\oplus E^c\oplus E^u$. If there exists a hyperbolic periodic point $p$ of $f$ with $ind\ p=dim\ E^s$ (resp. $ind\ p=dim\ E^s\oplus E^c$), and such that $\mathcal{F}^s(p)$ (resp. $\mathcal{F}^u(p)$ ) is dense in $M$, then $f$ is $ms$-minimal (resp. $mu$-minimal).      
\label{mmawe}\end{theorem}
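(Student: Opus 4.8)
The plan is to reduce the whole statement to the density criterion already established in Proposition \ref{PPP1}, using weak ergodicity only to place the periodic point $p$ in the $\omega$-limit (resp. $\alpha$-limit) set of almost every point. Indeed, Proposition \ref{PPP1} tells us that, under exactly the index and density hypotheses imposed on $p$, one has $\mathcal{F}^s(x)$ dense whenever $p\in\omega(x)$. So for $ms$-minimality it suffices to show that $p\in\omega(x)$ for $m$-almost every $x$.

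First I would treat the $ms$ case. By definition of weak ergodicity, $m$-a.e. $x$ has dense forward orbit, and I would observe that a dense forward orbit forces $\omega(x)=M$: since nonempty open subsets of $M$ are infinite, a dense set meets any such open set infinitely often, so deleting the finite initial segment $\{x,\dots,f^{N-1}(x)\}$ leaves the tail $\{f^n(x):n\ge N\}$ dense for every $N$; hence $\omega(x)=\bigcap_N\overline{\{f^n(x):n\ge N\}}=M$. In particular $p\in\omega(x)$ for $m$-a.e.\ $x$, and the hypotheses $\mathrm{ind}(p)=\dim E^s$ and $\mathcal{F}^s(p)$ dense are precisely those of Proposition \ref{PPP1}. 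Applying that proposition to each such $x$ gives $\mathcal{F}^s(x)$ dense, i.e.\ $x\in\mathcal{X}^s(f)$; therefore $m(\mathcal{X}^s(f))=1$ and $f$ is $ms$-minimal.

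Next, for the $mu$ case I would run the same scheme for $f^{-1}$, using $\mathcal{F}^u(\cdot,f)=\mathcal{F}^s(\cdot,f^{-1})$ and $\alpha_f=\omega_{f^{-1}}$. The strong stable bundle of $f^{-1}$ is $E^u$, and $\mathrm{ind}_{f^{-1}}(p)=\dim E^u$ is equivalent to $\mathrm{ind}_f(p)=\dim(E^s\oplus E^c)$, which is the standing hypothesis; likewise $\mathcal{F}^u(p)$ dense is the density hypothesis for $f^{-1}$. The unstable analogue of Proposition \ref{PPP1} then closes the argument, provided $m$-a.e.\ $x$ satisfies $p\in\alpha(x)$, i.e.\ provided $f^{-1}$ is again weakly ergodic. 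This is the only point that does not reduce immediately to an already-proven statement, and I expect it to be the main (though routine) obstacle, since it is where the hypothesis $f^*m=m$ is genuinely used.

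To settle it, I would fix a basic open set $U_k$ and set $B_k=\{x:\mathcal{O}^-(x)\cap U_k=\emptyset\}$. Because $Q_k:=\bigcup_{n\ge 0}f^n(U_k)$ satisfies $f(Q_k)\subseteq Q_k$, its complement $B_k=Q_k^c$ obeys $f^{-1}(B_k)\subseteq B_k$; as $f$ preserves $m$, the equality $m(f^{-1}B_k)=m(B_k)$ forces $m(B_k\setminus f^{-1}B_k)=0$, so $B_k$ is forward invariant mod $0$ and $m$-a.e.\ point of $B_k$ has its entire forward orbit in $B_k$. Since $B_k\cap U_k=\emptyset$, if $m(B_k)>0$ this would produce a positive-measure set of points with forward orbit missing $U_k$, contradicting the density of forward orbits established above. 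Hence $m(B_k)=0$ for every $k$, which is exactly the assertion that $m$-a.e.\ backward orbit is dense, so that $f^{-1}$ is weakly ergodic and $p\in\alpha(x)$ for $m$-a.e.\ $x$. This completes the reduction and yields $mu$-minimality.
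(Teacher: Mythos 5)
Your proof is correct and follows essentially the same route as the paper: weak ergodicity places $p$ in $\omega(x)$ (resp.\ $\alpha(x)$) for $m$-almost every $x$, and Proposition \ref{PPP1} applied to $f$ (resp.\ to $f^{-1}$) then yields density of the corresponding strong leaf for those points. The only difference is that the paper invokes Proposition \ref{PPP1} for $f^{-1}$ without comment, while you explicitly verify, via the measure-preservation argument with $B_k=Q_k^c$, that $m$-almost every backward orbit is dense; this correctly fills a detail the paper leaves implicit and is where the hypothesis $f^*m=m$ is genuinely used.
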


{\it Proof:} This theorem is in fact a directly consequence of our criterion in the subsection \ref{sub1}.  In fact, since $f$ is weakly ergodic, then for almost every point $x$ in $M$ the forward orbit of $x$ is dense in $M$, in particular the hyperbolic periodic point $p\in \omega(x)$, which implies by Proposition \ref{PPP1} that $\mathcal{F}^s(x)$ is dense in $M$, for almost every point $x$. 

Respectively, if $ind\ p=dim\ E^s\oplus E^c$ we can use Proposition \ref{PPP1}, as before,   but now for $f^{-1}$ to conclude that $\mathcal{F}^u(x)$ is dense for almost every point $x$.

$\hfill\square$

In the sequence we will use Theorem \ref{mmawe} to prove Theorem \ref{mmc2}. 

Another important tool we use in the proof of Theorem \ref{mmawe} is  the well known blender sets, introduced by Bonatti and Diaz, \cite{BC}.  What follows is a definition of a blender given in \cite{BDV}. 

\begin{definition}
Let $f:M\rightarrow M$ be a diffeomorphism and $p$ a hyperbolic periodic point of index $i$. We say that $f$ has a {\it blender associated to $p$} if there is a $C^1-$neighborhood $\mathcal{U}$ of $f$ and a $C^1$ open set $\mathcal{D}$ of embeddings of an $(d-i-1)-$dimensional disk $D$ into $M$, such for every $g\in \mathcal{U}$, every disk $D\in \mathcal{D}$ intersects the closure of $W^s(p(g))$, where $p(g)$ is the continuation of the periodic point $p$ for $g$. Moreover, we say that a blender is {\it acctivated} by a hyperbolic periodic point $\tilde{p}$ of index $i+1$ if the unstable manifold of $\tilde{p}$ contains a disk of the superposition region.
\end{definition}

According to the above definition we have the following result: 

\begin{lemma}[Lemma 6.12 in \cite{BDV}]
Let $f:M\rightarrow M$ be a diffeomorphism having a blender associated to a hyperbolic periodic point  $p$  of index $i$. Suppose that the blender is activated by a hyperbolic periodic point $\tilde{p}$ of index $i+1$. Then, for every diffeomorphism $g$ in a small enough $C^1-$neighborhood of $f$, the closure of $W^s(p(g))$ contains $W^s(\tilde{p}(g))$.
\label{l.livro}\end{lemma}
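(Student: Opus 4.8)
The plan is to pass from the single robust heteroclinic intersection that the blender provides to the full inclusion of stable manifolds, using the invariance of the closed set $\overline{W^s(p(g))}$ together with the inclination ($\lambda$-)lemma. I would first make a harmless reduction: replacing $f$ (and every $g$ in the neighborhood $\mathcal{U}$) by a fixed power $f^N$, where $N$ is a common multiple of the periods of $p$ and $\tilde{p}$, I may assume that $p$ and $\tilde{p}$ are fixed points, so that $W^s(p(g))$ and $W^s(\tilde{p}(g))$ are genuinely $g$-invariant and $\overline{W^s(p(g))}$ is a closed $g$-invariant set. Since stable manifolds are unchanged under passing to a power, it suffices to prove the inclusion at this level.

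\textbf{Step 1 (a robust heteroclinic point).} Because the blender is activated by $\tilde{p}$, the unstable manifold $W^u(\tilde{p}(f))$ contains a disk of the superposition region, i.e. a disk $D_0 \in \mathcal{D}$. As unstable manifolds vary continuously with $g$ and $\mathcal{D}$ is $C^1$-open, for every $g$ in a small enough $\mathcal{U}$ the continuation $W^u(\tilde{p}(g))$ still contains a disk $D_g \in \mathcal{D}$. The defining property of the blender then gives $D_g \cap \overline{W^s(p(g))} \neq \emptyset$, so there is a point $q_g \in W^u(\tilde{p}(g)) \cap \overline{W^s(p(g))}$ for every $g \in \mathcal{U}$. \textbf{Step 2 ($\tilde{p}$ enters the closure).} Fix $g \in \mathcal{U}$. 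Since $q_g \in W^u(\tilde{p}(g))$, the backward iterates $g^{-n}(q_g)$ converge to $\tilde{p}(g)$; since $\overline{W^s(p(g))}$ is closed and $g$-invariant, each $g^{-n}(q_g)$ lies in it, and passing to the limit yields $\tilde{p}(g) \in \overline{W^s(p(g))}$.

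\textbf{Step 3 (spreading to the whole stable manifold).} It remains to upgrade ``$\tilde{p}(g)$ belongs to the closure'' to the full inclusion. Because $W^s(\tilde{p}(g)) = \bigcup_{n \ge 0} g^{-n}\bigl(W^s_{loc}(\tilde{p}(g))\bigr)$ and $\overline{W^s(p(g))}$ is $g$-invariant, it is enough to prove $W^s_{loc}(\tilde{p}(g)) \subset \overline{W^s(p(g))}$. Here I would use the full covering property of the blender rather than a single intersection: the disks of $\mathcal{D}$ fill a neighborhood of the superposition region, so for every point $y$ on a small $(\dim E^s + 1)$-dimensional disk $\Delta$ transverse to $W^u_{loc}(\tilde{p}(g))$ at $\tilde{p}(g)$ there is a disk of $\mathcal{D}$ through $y$ meeting $\overline{W^s(p(g))}$ arbitrarily near $y$. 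This places points of $\overline{W^s(p(g))}$ densely along $\Delta$; being closed, $\overline{W^s(p(g))}$ then contains $\Delta$. The dimension of $\Delta$ is complementary to that of $W^u_{loc}(\tilde{p}(g))$, so the inclination lemma applies: the backward iterates $g^{-n}(\Delta)$, which remain inside the invariant set $\overline{W^s(p(g))}$, accumulate in the $C^1$ topology on $W^s_{loc}(\tilde{p}(g))$. Hence $W^s_{loc}(\tilde{p}(g)) \subset \overline{W^s(p(g))}$, and invariance gives $W^s(\tilde{p}(g)) \subset \overline{W^s(p(g))}$.

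The delicate point is clearly Step 3: passing from the single robust intersection to the transverse disk $\Delta \subset \overline{W^s(p(g))}$ genuinely requires the geometric ``fatness'' of the blender — that its local stable set meets a $C^1$-open family of disks of dimension $d - \dim E^s - 1$ filling the superposition region — and this is where the internal structure of the blender, not just the black-box intersection consequence, must be exploited. Once $\Delta$ is available, the inclination lemma and the invariance of $\overline{W^s(p(g))}$ close the argument routinely, and robustness (validity for all $g \in \mathcal{U}$) is automatic, since each estimate above was obtained uniformly over the neighborhood.
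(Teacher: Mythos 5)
First, a point of reference: the paper does not prove this lemma at all; it is quoted as Lemma 6.12 of \cite{BDV}, so your attempt can only be compared with the standard proof from that reference.

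Your Steps 1 and 2 are correct, but they only yield $\tilde p(g)\in\overline{W^s(p(g))}$, so all the weight falls on Step 3, and there the argument has a genuine gap. You claim that for every point $y$ of a small $(i+1)$-dimensional disk $\Delta$ through $\tilde p(g)$ there is a disk of $\mathcal{D}$ through $y$ meeting $\overline{W^s(p(g))}$ \emph{arbitrarily near} $y$, and you deduce $\Delta\subset\overline{W^s(p(g))}$. The blender property gives no control whatsoever on \emph{where} on a disk $D\in\mathcal{D}$ the intersection with $\overline{W^s(p(g))}$ occurs. The disks of $\mathcal{D}$ have a definite size: the family is $C^1$-open, so it tolerates small perturbations of each embedding, but one cannot shrink a disk toward a point and remain inside $\mathcal{D}$. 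Hence the intersection point on the disk through $y$ may sit at distance comparable to the diameter of that disk from $y$, and no density along $\Delta$ follows. Indeed, for the model blender the local stable set is a Cantor lamination by $i$-dimensional disks, which is nowhere dense near the superposition region; the blender's ``fatness'' is precisely that this thin set still meets every disk of a $C^1$-open family, not that it fills up open sets. So the key containment $\Delta\subset\overline{W^s(p(g))}$ is unjustified (and not derivable from the hypotheses), and your closing remark that the internal structure of the blender must be exploited points in the wrong direction: the correct proof is a pure black-box argument.

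For comparison, the standard proof runs as follows. Given $x\in W^s(\tilde p(g))$, take a disk $\Delta'$ through $x$, transverse to $W^s(\tilde p(g))$, of complementary dimension $d-i-1$ (the dimension of the disks of $\mathcal{D}$, not $i+1$), and of arbitrarily small diameter. By the inclination lemma applied \emph{forwards}, $g^n(\Delta')$ accumulates in the $C^1$ topology on $W^u(\tilde p(g))$, which by the activation hypothesis contains a disk $D_g\in\mathcal{D}$; since $\mathcal{D}$ is $C^1$-open, for $n$ large $g^n(\Delta')$ contains a sub-disk belonging to $\mathcal{D}$, hence meets $\overline{W^s(p(g))}$. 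Pulling back by $g^{-n}$ (after your harmless reduction to fixed points, $\overline{W^s(p(g))}$ is $g$-invariant) gives $\Delta'\cap\overline{W^s(p(g))}\neq\emptyset$, and letting the diameter of $\Delta'$ tend to zero yields $x\in\overline{W^s(p(g))}$. Every ingredient is uniform over the $C^1$-neighborhood $\mathcal{U}$, which gives the robust statement. Your Step 3 should be replaced by this argument; Steps 1 and 2 then become superfluous.
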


The next result gives an abundance of Blenders in the conservative setting, (See also \cite{Ca}).

\begin{theorem}[Theorem 1.1 in \cite{RHRHTU}]
Let $f\in \diff^r_m(M)$  such that $f$ has two hyperbolic periodic points $p$ of index $i$ and $\tilde{p}$ of index $i+1$. Then there are $C^r$ diffeomorphisms arbitrary $C^1-$close to $f$ which preserve $m$ and admit a blender associated to the analytic continuation of $p$. 
\label{rrtu}\end{theorem}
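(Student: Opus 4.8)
The plan is to create the blender by a $C^1$-small, volume-preserving perturbation, following the iterated-function-system (IFS) picture of Bonatti--D\'iaz (as codified in \cite{BDV}) but carried out entirely inside $\diff^r_m(M)$. First I would exploit the index gap. Since $p$ has index $i$ and $\tilde p$ has index $i+1$, a $C^1$-small conservative perturbation (the connecting lemma \cite{H} in its conservative form) can be used to produce a heterodimensional cycle associated to $\{p,\tilde p\}$: a transverse heteroclinic intersection $W^u(p)\pitchfork W^s(\tilde p)$, which is robust because $\dim W^u(p)+\dim W^s(\tilde p)=(d-i)+(i+1)=d+1>d$, together with a quasi-transverse heteroclinic orbit in $W^s(p)\cap W^u(\tilde p)$, whose dimensions add to $d-1$. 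The extra stable direction of $\tilde p$ singles out exactly one \emph{center} direction that is unstable along the orbit of $p$ and stable along the orbit of $\tilde p$; this one-dimensional direction is where the blender will be manufactured.

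Second, I would pass to a local affine model. Using a volume-preserving change of coordinates of Moser/Dacorogna--Moser type, I would linearize $f$ near $p$ so that the first return to a small box is affine, with a dominated splitting into strong-stable, the one-dimensional center direction above, and unstable parts. The blender is then encoded by an IFS $\{\varphi_1,\varphi_2\}$ of affine contractions of the center interval $I$: $\varphi_1$ is the (inverse) return near $p$, while $\varphi_2$ is produced from the excursion along the cycle through $\tilde p$. The defining \emph{covering} (superposition) property is that $\varphi_1(I)\cup\varphi_2(I)\supset I$ with the two images overlapping in a subinterval of definite length, so that the local stable lamination of the resulting horseshoe $\Gamma\ni p$ projects onto an open set in the center direction.

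The heart of the matter, and the step I expect to be the main obstacle, is to realize such a covering IFS by a $C^1$-small perturbation that keeps $g^*m=m$. In the dissipative category one prescribes the contraction rates and the translations of $\varphi_1,\varphi_2$ freely; under the constraint $\det Dg=1$ these are coupled, so I would argue that the covering is governed by the relative \emph{translation} of the two images—the position at which the cycle excursion re-enters the box—which is volume-neutral and can be tuned at will, while any center contraction is rebalanced by adjusting the strong-unstable expansion. It is precisely here that both periodic points are needed: the index-$(i+1)$ orbit supplies a return branch that is expanding in the center direction, so that the Jacobian can be kept equal to one while the two branches are positioned to overlap. Concretely I would first write down the desired affine volume-preserving box model and then interpolate it with $f$ at $C^1$-small cost using the conservative pasting lemma (Arbieto--Matheus), thereby staying inside $\diff^r_m(M)$.

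Finally I would check robustness, which is immediate from the definite overlap. Because $\varphi_1(I)\cup\varphi_2(I)$ covers $I$ with a margin, every $g$ in a $C^1$-neighborhood $\mathcal{U}$ of the perturbed map still has return branches whose images cover $I$, so the local stable lamination of the continuation $\Gamma(g)\ni p(g)$ meets every embedded $(d-i-1)$-dimensional disk lying in the superposition region; taking $\mathcal{D}$ to be this $C^1$-open family of disks gives, for each $g\in\mathcal{U}$, a disk intersecting $\overline{W^s(p(g))}$ robustly, which is exactly a blender associated to $p(g)$. As a by-product, the transverse heteroclinic built in the first step places a disk of $W^u(\tilde p(g))$, of the correct dimension $d-i-1$, inside the superposition region, so the blender is automatically activated by $\tilde p$ in the sense of Lemma \ref{l.livro}.
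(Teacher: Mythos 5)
This statement is not proved in the paper at all: it is imported verbatim as Theorem 1.1 of \cite{RHRHTU} and used as a black box, so there is no internal proof to compare your argument against; the only meaningful benchmark is the proof in \cite{RHRHTU} itself. Your outline does follow the same broad route as that source (create a co-index one heterodimensional cycle between $p$ and $\tilde p$ via a conservative connecting lemma, pass to a linearized box model near the cycle, realize the Bonatti--D\'iaz covering property by a perturbation glued in with a conservative pasting lemma, and get robustness from the definite overlap, as in \cite{BDV}), and your dimension counts ($\dim W^u(p)+\dim W^s(\tilde p)=d+1$, quasi-transverse intersection of dimensions summing to $d-1$, disks of dimension $d-i-1$) are correct.

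As a proof, however, the proposal has a genuine gap, and it sits exactly where you yourself flag ``the main obstacle'': realizing the overlapping IFS under the constraint $g^*m=m$ \emph{is} the content of the cited theorem, and you only assert that it can be done (``the relative translation is volume-neutral\dots any center contraction is rebalanced by adjusting the strong-unstable expansion''). Moreover, the one concrete mechanism you offer is backwards: near $\tilde p$, which has index $i+1$, the center direction is \emph{contracted}, not expanded, so the excursion branch through $\tilde p$ cannot ``supply a return branch that is expanding in the center direction.'' Since the blender must be a hyperbolic set of index $i$, both return branches have to expand the center; for the branch travelling through the cycle this expansion has to be engineered --- typically by controlling the relative number of iterates spent near $p$ versus near $\tilde p$ --- while simultaneously keeping the Jacobian equal to one, and this balancing act is precisely the delicate conservative step carried out in \cite{RHRHTU}. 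A second, smaller gap is regularity bookkeeping: the connecting-lemma step produces perturbations that are only $C^1$ (and only $C^1$-small), while the conclusion demands a $C^r$ volume-preserving diffeomorphism $C^1$-close to $f$; one must either perform every perturbation inside the $C^r$ conservative category or invoke a conservative smoothing/regularization result (as is done in the cited paper, via \cite{Ze}-type theorems) together with the $C^1$-robustness of the blender and of the continuation $p(g)$ --- none of which appears in your sketch.
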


What follows is a by-product of a conservative version of results in \cite{CSY} and \cite{ABCDW}:

\begin{theorem}
There is a residual subset $\mathcal{R}\subset \diff_m^1(M)-\{cl(\mathcal{HT})\}$ such that for every $f\in \mathcal{R}$,  $f$ is partially hyperbolic having non trivial extremal sub bundles with decomposition $TM=E^s\oplus E^c\oplus E^u$,  and moreover there exists hyperbolic periodic points $p_0,\ldots, p_k$ of $f$, where $k=dim\ E^s\oplus E^c$, such that $ind\ p_i=dim\ E^s+i$, for any $i=0,\ldots, k$.
\label{p.CSY}\end{theorem}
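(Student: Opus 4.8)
The plan is to assemble the statement from standard $C^1$-generic conservative mechanisms, as the phrase ``by-product of a conservative version of \cite{CSY} and \cite{ABCDW}'' already suggests. There are three ingredients: the partial hyperbolicity with one-dimensionally split center produced far from tangencies, the fact that generically the whole manifold is a single homoclinic class, and the index-interval phenomenon for homoclinic classes carrying such a splitting.

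First I would fix the partially hyperbolic structure. By the conservative version of the Crovisier--Sambarino--Yang dichotomy \cite{CSY} (see also \cite{ACS}), there is a residual subset $\mathcal{R}_1\subset\diff^1_m(M)\setminus cl(\mathcal{HT})$ on which $f$ is partially hyperbolic with nontrivial extremal bundles, $TM=E^s\oplus E^c\oplus E^u$, and the center admits a finest dominated splitting into one-dimensional sub-bundles $E^c=E_1\oplus\cdots\oplus E_k$ with $k=\dim E^c$. Here $E^s$ (resp. $E^u$) is the maximal uniformly contracted (resp. expanded) bundle, so every periodic orbit has index in $\{\dim E^s,\dots,\dim E^s+k\}$. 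I would then intersect $\mathcal{R}_1$ with the Bonatti--Crovisier residual set $\mathcal{R}_2$ \cite{BC}, on which $f$ is transitive and $M$ is a single homoclinic class $H(p,f)$; in particular all periodic orbits of $f$ are homoclinically related.

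The core is to realize every intermediate index by a hyperbolic periodic orbit. The two extremal indices are attained because, $C^1$-generically, the dimension of the maximal uniformly contracted (resp. expanded) bundle equals the minimal (resp. maximal) index of periodic orbits, a standard consequence of the ergodic closing lemma together with the domination; thus $\dim E^s$ and $\dim E^s+k$ are realized. For the indices strictly between, I would invoke the conservative version of the Abdenur--Bonatti--Crovisier--D\'iaz--Wen theorem \cite{ABCDW}: since the center splitting over the class $M=H(p,f)$ decomposes into one-dimensional pieces and there are homoclinically related periodic orbits realizing the two extremal indices, the set of indices of periodic orbits in the class is a full interval of integers. Interpolating, one obtains hyperbolic periodic points $p_0,\dots,p_k$ with $\indi p_i=\dim E^s+i$ for every $i=0,\dots,k$. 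The desired residual set is $\mathcal{R}=\mathcal{R}_1\cap\mathcal{R}_2\cap\mathcal{R}_3$, where $\mathcal{R}_3$ is the residual set on which this index-interval conclusion holds.

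I expect the main obstacle to be the conservative form of the index-interval step. The argument of \cite{ABCDW} changes the index of a periodic orbit by one unit at a time by perturbing along a one-dimensional center sub-bundle, and one must carry this out inside $\diff^1_m(M)$ using a volume-preserving connecting and perturbation lemma, while keeping the orbits homoclinically related and the diffeomorphism away from $cl(\mathcal{HT})$. Verifying that these perturbations can be made volume-preserving, and that the finest one-dimensional splitting genuinely prevents any intermediate bundle from being uniformly hyperbolic (so that the intermediate-index orbits truly exist rather than merely being interpolated), is the delicate point; this is precisely what the conservative reformulations of \cite{CSY} and \cite{ABCDW} are designed to supply.
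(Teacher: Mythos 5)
The paper offers no proof of this theorem at all: it is quoted as ``a by-product of a conservative version of results in \cite{CSY} and \cite{ABCDW}'', and your proposal reconstructs exactly that intended derivation --- the conservative \cite{CSY} partial hyperbolicity with one-dimensional center splitting far from $cl(\mathcal{HT})$, generic transitivity and the whole manifold being a single homoclinic class from \cite{BC}, realization of the extremal indices via the (conservative) ergodic closing lemma and the maximality convention on $E^s$, $E^u$, and the conservative \cite{ABCDW} index-interval property to fill in the intermediate indices --- so it is essentially the same approach, with the honest caveat (shared by the paper) that the conservative versions of these perturbation results are invoked rather than proved. One further point in your favor: your sketch implicitly corrects a slip in the statement, since the integer $k$ there must be $\dim E^c$ rather than $\dim(E^s\oplus E^c)$ --- no hyperbolic periodic point of a partially hyperbolic diffeomorphism can have index exceeding $\dim(E^s\oplus E^c)$, and it is the corrected version with top index $\dim(E^s\oplus E^c)$ that your argument (and the later application in the proof of Theorem \ref{mmc2}) actually uses.
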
 

Finally we can prove Theorem \ref{mmc2}.

\vspace{0,3cm}
{\it Proof of Theorem \ref{mmc2}:}

\vspace{0,1cm}
Let $\mathcal{R}$ be the residual subset given by Theorem \ref{p.CSY}. Hence, we consider $f\in \mathcal{R}$ and $p_0,\ldots, p_k$ the hyperbolic periodic points given by Theorem \ref{p.CSY}. Recall  $k=dim\ E^s\oplus E^c$, if $TM=E^s\oplus E^c\oplus E^u$ is the partially hyperbolic decomposition given by $f$. Since the index of a hyperbolic periodic point does not change for its analytic continuation, and since blender sets are robust, we can use Theorem \ref{rrtu} to find an open set $\mathcal{U}\subset \diff^1_m(M)$ arbitrary close to $f$ such that for every $g\in \mathcal{U}$ there exists a blender set $\Lambda_i(g)$ associated to each $p_i(g)$, for any $i=0,\ldots, k$.  

Now, using that generic volume preserving diffeomorphisms are transitive (see \cite{BC}) and the connecting lemma, we can find $g_1\in \mathcal{U}$ such that the blender set $\Lambda_1(g_1)$ is activated by $p_2(g_1)$. This implies, by   Lemma \ref{l.livro}, that there exists an open set $\mathcal{U}_1\subset \mathcal{U}$ such that the closure of $W^s(p_1(g))$ contains $W^s(p_2(g))$ for every $g\in \mathcal{U}_1$. Using the above arguments again, we can find an open set $\mathcal{U}_2\subset \mathcal{U}_1$ such that $W^s(p_2(g))$ contains $W^s(p_3(g))$ for every $g\in \mathcal{U}_1$. And thus, repeating this process finitely many times we can obtain an open set $\mathcal{V}\subset \mathcal{U}$ such that 

\begin{equation}
cl(W^s(p_i(g)))\supset W^s(p_{i+1}(g)), \quad \text{ for every } i=0,\ldots,k-1, \text{ and } g\in \mathcal{V}. 
\label{yes}\end{equation}

 Reducing the open set $\mathcal{V}$, if necessary, we can \cite{DW} and Remark \ref{argumentoACW} to assume that every $C^2-$diffeomorphism $g$ in $\mathcal{V}$ is weakly ergodic. 

Hence, let $g\in \mathcal{V}$ be a $C^2-$diffemorphism. Hence, $g$ is topologically transitive, which implies the existence of a dense backward orbit of $g$, say $\{g^{-n}(x)\}_{n\in \N}$. Since $g$ is partially hyperbolic,  the local strong unstable manifolds has uniform length, and thus there exists $n_0\in \N$ such that $\mathcal{F}^u(f^{-n_0}(x))$ intersects transversally $W_{loc}^s(p_{k}(g))$. Thus, the accumulation points of $\{f^{-n}(x)\}_{n\in \N}$ is also accumulated by points in $W^s(p_{k}(g))$, which implies the stable manifold of $p_k(g)$ is dense in $M$. Then, using (\ref{yes}) we conclude that $W^s(p_0(g))=\mathcal{F}^s(p_0(g))$ is also dense in $M$.

We have just proved  that $g$ satisfies the hypothesis of Theorem \ref{mmc2}, which implies $g$ is $ms$-minimal. 

Therefore, since $\mathcal{V}$ is arbitrary close to $f$, and $f$ is arbitrary in $\mathcal{R}$, by standard topology arguments we can find  an open set $\mathcal{A}_s\subset  \diff^1_m(M)\setminus cl(\mathcal{HT})$ such that any $g\in \mathcal{A}_s$  is $ms$-minimal, and $\mathcal{R}$ is contained in the closure of $\mathcal{A}_s$. 

Now, since the partially hyperbolic diffeomorphisms in $\mathcal{R}$ has non trivial extremal sub bundles, the above arguments can also be done for $f^{-1}$, to find an open set $\mathcal{A}_u\subset  \diff^1_m(M)\setminus cl(\mathcal{HT})$ such that any $g\in \mathcal{A}_u$  is $mu$-minimal, and $\mathcal{R}$ is also contained in the closure of $\mathcal{A}_u$. 

The proof of theorem is finished taking $\mathcal{A}=\mathcal{A}_s\cap \mathcal{A}_u$.

$\hfill\square$

\subsection{$m$-minimality in the symplectic setting:}

In this section we will prove Theorem \ref{symp.sett.}. Here $(M,\omega)$ is a symplectic manifold, being $\omega$ a symplectic form on $M$. Also, in this subsection $m$ will denotes the volume form on $M$ induced by the exterior powers of $\omega$.

The next result use m-elliptic periodic points to see density of strong leafs of partially hyperbolic diffemorphisms. 
Recall that a periodic point $p$ of a symplectic diffeomorphism $f$ with period $\tau(p)$ is called {\it m-elliptic} if $Df^{\tau(p)}(p)$ has exactly $2m$ modulus one eigenvalues, which must be non real and
simple eigenvalues.

Before we state the result, given $\delta$ and $\eps$ positive, we denote by $\mathcal{B}_{\omega}^{s}(\eps,\delta)$ and $\mathcal{B}_{\omega}^u(\eps,\delta)$  the set formed by symplectic partially hyperbolic diffeomorphisms $f$ such that  $m(\mathcal{X}_{\delta}^s(f))>1-\eps $ and 
$m(\mathcal{X}_{\delta}^u(f))>1-\eps$, respectively.

\begin{proposition}
Let $f\in \diff^1_{\omega}(M)$ be partially  hyperbolic having a $2m-$dimensional center bundle, and  a $m$-elliptic periodic point $p$. Thus, for any neighborhood $\mathcal{U}\subset \diff^1_{\omega}(M)$ of $f$ and any $\eps$ and $ \delta>0$ there exists a symplectic partially hyperbolic diffeomorphism $g\in \mathcal{U}$ which belongs to $\mathcal{B}^s(\eps,\delta)$ (resp. $\mathcal{B}^u(\eps,\delta)$).
\label{prop.symplectic}\end{proposition}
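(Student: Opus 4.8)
The plan is to promote the single dense strong stable leaf produced by the connecting lemma into a $\delta$-dense leaf through almost every point, by feeding the $m$-elliptic point into the density criterion of Proposition \ref{criterion.density}. Throughout I would treat the $\mathcal{B}^s(\eps,\delta)$ statement; the $\mathcal{B}^u(\eps,\delta)$ statement follows verbatim after replacing $f$ by $f^{-1}$, since $p$ is $m$-elliptic for $f$ if and only if it is for $f^{-1}$ and $\mathcal{F}^s(\cdot,f)=\mathcal{F}^u(\cdot,f^{-1})$.

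First I would fix $\eps,\delta>0$ and apply Proposition \ref{prop.density}, which is stated precisely for an $m$-elliptic $p$, to obtain a symplectic partially hyperbolic $g_1\in\mathcal{U}$ arbitrarily close to $f$ with $\mathcal{F}^s(p(g_1),g_1)$ dense in $M$; in particular some compact disc $\mathcal{F}^s_K(p(g_1))$ is $\delta$-dense. Because the $2m$ central eigenvalues of $p$ are simple, non-real and of modulus one, the symmetry of the symplectic spectrum keeps them on the unit circle under small symplectic perturbations, so $p(g_1)$ is again $m$-elliptic and the whole centre bundle stays neutral at $p(g_1)$.

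The key local input is condition (b) of Proposition \ref{criterion.density}, which is the elliptic analogue of Remark \ref{rmk.criterion}. To secure it I would perform a further $C^1$-small symplectic perturbation supported in a tiny neighbourhood $V$ of $p(g_1)$ that replaces $g_1^{\tau(p)}$ by its linear part at $p$. In suitable symplectic coordinates the restriction of $Dg^{-\tau(p)}(p)$ to $E^c$ is then a product of planar rotations, hence a Euclidean isometry, while on $E^u$ it is a strict contraction; consequently the center-unstable disc $D=B^c\times B^u\subset W^{cu}_{\mathrm{loc}}(p)$, with $B^c$ a round center ball, satisfies $g^{-\tau(p)}(D)\subset D$, and this inclusion persists under all backward iterates. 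Since $V$ is arbitrarily small the compact disc $\mathcal{F}^s_K(p(g))$ remains $\delta$-dense, so condition (a) at scale $\delta$ and condition (b) both hold for the resulting $g$. Proposition \ref{criterion.density} then yields that $\mathcal{F}^s(x)$ is $\delta$-dense for every $x$ with $p\in\omega(x)$, the harmless bounded loss coming from the fixed iterates $g^{\pm j}$ with $0\le j<\tau(p)$ being absorbed by shrinking $\delta$ at the outset. Hence $\{x:\ p\in\omega(x)\}\subset\mathcal{X}^s_{\delta}(g)$, and it remains to prove the measure bound $m\{x:\ p\in\omega(x)\}>1-\eps$.

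I expect this last measure estimate to be the main obstacle. For it I would use Poincar\'e recurrence together with the ergodic decomposition of $m$: on every ergodic component whose support contains $p$, almost every orbit equidistributes and hence accumulates at $p$, so such components lie inside $\mathcal{X}^s_{\delta}(g)$ up to a null set; the task is then to arrange, using the transitivity of generic conservative and symplectic systems (\cite{BC},\cite{ArBC}) and the connecting lemma already invoked above, that the components carrying $p$ in their support have total mass larger than $1-\eps$. The difficulty is precisely the tension created by the neutral centre: the linearizing perturbation that guarantees the backward-invariant disc $D$ also produces an elliptic island around $p$ on which orbits rotate at fixed centre-radius and never accumulate at $p$, so one must check that this island can be made of measure less than $\eps$ while still forcing all but an $\eps$-fraction of the remaining orbits to return arbitrarily close to $p$. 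This is where the symplectic structure and the genericity of transitivity are essential, and it is the reason the conclusion is $m(\mathcal{X}^s_{\delta}(g))>1-\eps$ rather than full measure.
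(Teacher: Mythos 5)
Your first half follows the paper's own route: Proposition \ref{prop.density} gives a dense strong stable leaf at the continuation of $p$, a pasting-lemma linearization near $p$ produces the locally $g^{-\tau(p)}$-invariant center--unstable disc required by condition (b) of Proposition \ref{criterion.density}, and continuity of compact parts of the leaves preserves $\delta$-density through these perturbations. The genuine gap is exactly where you flagged it: the measure estimate $m\{x:\ p\in\omega(x)\}>1-\eps$. Your proposed mechanism --- ergodic decomposition plus generic transitivity plus the connecting lemma --- cannot close it. Transitivity of $C^1$-generic symplectic diffeomorphisms is a topological statement about a residual set, while your $g$ is one specific perturbed map; and even if $g$ were transitive, transitivity gives no control on the ergodic decomposition of $m$: at the $C^1$ level there is no tool forcing the ergodic components whose supports contain $p$ to carry total mass larger than $1-\eps$, and the connecting lemma connects orbits, not ergodic components. (Your worry about a positive-measure elliptic island is, by contrast, not a real issue: after linearization the only locally trapped set is the center disc through $p$, which has zero volume since $E^s$ and $E^u$ are nontrivial.)

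What the paper uses instead, and what your argument is missing, is a weak-ergodicity mechanism. By Zehnder's smoothing theorem one can choose a $C^2$ symplectic diffeomorphism $f_1\in\mathcal{U}$; accessibility is open and dense among partially hyperbolic symplectic diffeomorphisms, and by Remark \ref{argumentoACW} (the Pesin/ACW argument) every $C^2$ accessible symplectic partially hyperbolic diffeomorphism preserving $m$ is weakly ergodic --- so there is a sub-neighborhood $\mathcal{U}_1\subset\mathcal{U}$ in which \emph{every} $C^2$ diffeomorphism is weakly ergodic. The pasting-lemma linearization is then performed inside $\mathcal{U}_1$, keeping the map $C^2$ and symplectic; the resulting $f_2$ is weakly ergodic, hence for $m$-almost every $x$ the forward orbit is dense, so $p\in\omega(x)$ almost everywhere, and Proposition \ref{criterion.density} gives $m(\mathcal{X}^s_{\delta}(f_2))=1>1-\eps$. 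In short, the measure bound comes from smoothing plus accessibility plus weak ergodicity, not from genericity of transitivity; without this ingredient your proof does not reach the conclusion.
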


{\it Proof: }
We prove the existence of such $g$ inside $\mathcal{B}^s(\eps,\delta)$. The other case is a consequence by considering $f^{-1}$ instead of $f$. 

By continuity of the partially hyperbolic splitting and the robustness of $m$-elliptic periodic points in the symplectic scenario, we can suppose all diffeomorphisms in $\mathcal{U}$ are partially hyperbolic having a partially hyperbolic decomposition with same central bundle, and moreover, every diffeomorphism in $\mathcal{U}$ has a $m$-elliptic periodic point $p(g)$ which is the analytic continuation of $p$. 

After a perturbation, using Proposition \ref{prop.density}, we can assume $f$ is such that $\mathcal{F}^s(p)$ is dense in $M$. Now, let $\eps>0$ and $\delta>0$  given arbitrary. Since the strong stable foliation also varies continuously with the diffeomorphism in compact parts, taking a small neighborhood $V$ of $p$ and $\mathcal{U}$ smaller, if necessary, we can suppose $\mathcal{F}^s(x,g)$ is $\delta$-dense in $M$ for every diffeomorphism $g\in \mathcal{U}$ and $x\in V$. 

By Zender \cite{Ze}, there is a $C^2-$diffeomorphism $f_1\in \mathcal{U}$.   Moreover, as in the conservative setting, we can use accessibility and Remark \ref{argumentoACW} to assume that there exists a neighborhood $\mathcal{U}_1\subset \mathcal{U}$ such that every $C^2-$diffeomorphism $g$ in $\mathcal{U}_1$ is weakly ergodic.  
Recall that acessibility is also true in an open and dense subset among partially  hyperbolic symplectic diffeomorphisms.

To simplify the notation we still denote by $p$ the analytic continuation of $p$ for $f_1$.  Now, using pasting lemma, we can perturb $f_1$ to find a $C^2-$diffeomorphism $f_2\in \mathcal{U}_1$ such that $p$ still is a $m$-elliptic periodic point of $f_2$, and moreover $f_2=Df_1(p)$  in a small neighborhood of $p$, in local coordinates. Hence, replacing $V$ by a small neighborhood of $p$ and looking to $V$ in local coordinates, if we consider $E^s$ and $E^c$ the stable and center bundles of $f_2$, respectively, we have that  
 $D=(E^c\oplus E^u(p))\cap V$ is locally $f_2^{-\tau(p)}-$invariant. In fact, we have that $Df^{-1}_1(p)|E^u$ contracts and $Df^{-\tau(p)}_1(p)|E^c$ has norm equal to one. 

Now, since $f_2$ is $C^2$ and belongs to $\mathcal{U}_1$ it is weakly ergodic. Thus, for almost every point $x$ in $M$, $p$ belongs to $\omega(x)$, which implies by Proposition \ref{criterion.density} that $\mathcal{F}^s(x,f_2)$ is $\delta$-dense in $M$, since $\mathcal{F}^s(p,f_2)$ is. 

$\hfill\square$      

In the hypothesis of Proposition \ref{prop.symplectic} we have the existence of a $m$-elliptic periodic point for a partially hyperbolic diffeomorphism with center dimension equal to $2m$. This hypothesis was essential in the proof of such result. However, it is not directly that this fact happens for an arbitrary symplectic partially hyperbolic diffeomorphism. In fact, this was a question posed by \cite{ArBC}. Fortunately, it was proved in \cite{CH} that this happens for an open and dense subset among partially hyperbolic symplectic diffeomorphisms. More precisely:

\begin{theorem}[Theorem A in \cite{CH}]
\label{t.trichotomy}
There exists an open and dense subset $\mathcal{A}
\subset \diff^1_{\omega}(M)$, such that if $f\in \mathcal{A}$ is a partially hyperbolic diffeomorphism with  2m-dimensional center bundle, then $f$ has a $m$-elliptic periodic point. 
\end{theorem}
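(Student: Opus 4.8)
The plan is to prove the statement via a $C^1$-generic dichotomy for the derivative cocycle restricted to the center bundle, combined with a symplectic perturbation lemma. The two essential tools are the symplectic version of Franks' lemma, which realizes a prescribed perturbation of $Df$ along a finite periodic orbit while keeping the map in $\diff^1_{\omega}(M)$, and the Bochi--Ma\~n\'e theory of Lyapunov exponents in the conservative setting. First I would reduce to a convenient generic class: by Zehnder's density theorem the $C^\infty$ symplectomorphisms are $C^1$-dense, and by the symplectic Kupka--Smale theorem a generic $f$ has every periodic point either hyperbolic or with simple, non-real, modulus-one eigenvalues. The task then becomes showing that, generically, partial hyperbolicity with a $2m$-dimensional center forces the existence of a periodic orbit along which all $2m$ center eigenvalues lie on the unit circle.

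The core step is a trichotomy for the center cocycle. The invariant subbundle $E^c$ is symplectic, since $E^s\oplus E^u$ is its symplectically orthogonal hyperbolic complement, so the restriction of $Df$ to $E^c$ is a symplectic cocycle of rank $2m$. I would apply the Bochi-type alternative: for $C^1$-generic $f$, either $E^c$ admits a nontrivial dominated splitting, or the center Lyapunov exponents can be driven to zero on a positive-measure set by an arbitrarily small perturbation. The first alternative is excluded by the standing convention that the extremal bundles $E^s,E^u$ already absorb every $Df$-invariant subbundle that is contracted or expanded by some iterate: a dominated sub-splitting of $E^c$ with a uniformly contracted or expanded extreme piece would enlarge $E^s$ or $E^u$, while the remaining finer splittings are ruled out by genericity in the symplectic category, where eigenvalues occur in quadruples $\{\lambda,\lambda^{-1},\bar\lambda,\bar\lambda^{-1}\}$. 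Hence the center exponents vanish.

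Given vanishing center exponents, I would invoke the symplectic Franks' lemma to produce a periodic orbit along which the center part of the derivative has all $2m$ eigenvalues of modulus one; a further arbitrarily small generic perturbation makes these eigenvalues simple and non-real, yielding exactly an $m$-elliptic periodic point. Openness of the conclusion is then immediate from the robustness of $m$-elliptic periodic points, since their defining spectral condition is an open condition on $Df^{\tau(p)}$ and persists for the analytic continuation under $C^1$-perturbation; density comes from the generic construction above. The standard upgrade from a dense (residual) set with an open property to an open and dense set then produces the set $\mathcal{A}$.

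The main obstacle is the trichotomy step. One must verify that the failure of domination inside the center genuinely permits the symplectic Franks' lemma to rotate all $2m$ center eigenvalues onto the unit circle \emph{simultaneously}, without recreating a dominated splitting and without breaking the eigenvalue symmetry $\{\lambda,\lambda^{-1},\bar\lambda,\bar\lambda^{-1}\}$ forced by $f^*\omega=\omega$. Controlling a perturbation that respects the symplectic form while achieving a full modulus-one center spectrum, and certifying genericity of the simple non-real configuration, is the delicate heart of the argument.
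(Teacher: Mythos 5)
The first thing to note is that the paper does not prove this statement at all: it is imported verbatim as Theorem A of \cite{CH} and used as a black box in the proof of Theorem \ref{symp.sett.}. So there is no internal proof to compare against; the relevant comparison is with \cite{CH} itself, and your outline does follow the same architecture as that work: view $Df|_{E^c}$ as a symplectic cocycle (legitimate, since $E^s\oplus E^u$ is the $\omega$-orthogonal complement of $E^c$), run a dichotomy between domination inside $E^c$ and perturbations putting center eigenvalues on the unit circle, realize those perturbations with a symplectic Franks-type lemma, and use robustness of $m$-elliptic points to upgrade density to open-and-dense. In outline, then, you are aiming at the right theorem with the right tools.

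There are, however, two genuine gaps. First, your exclusion of the dominated-splitting alternative does not work as stated. The paper's convention that $E^s$ and $E^u$ absorb all uniformly contracted or expanded invariant subbundles does not forbid a finer dominated splitting $E^c=E_1\oplus E_2$ in which \emph{neither} factor is uniformly contracted or expanded; in the volume-preserving (non-symplectic) world such splittings genuinely occur, and no Kupka--Smale or ``eigenvalue quadruple'' genericity can remove them, because domination is a robust property. What actually kills this alternative is a rigidity theorem special to the symplectic class (see \cite{ArBC}): for $f^*\omega=\omega$, a dominated splitting $TM=E\oplus F$ forces the extremal bundle of smaller (or equal) dimension to be uniformly hyperbolic; applied to $(E^s\oplus E_1)\oplus(E_2\oplus E^u)$, this shows that a dominated sub-splitting of the center would make $E_1$ uniformly contracted or $E_2$ uniformly expanded, contradicting maximality of $E^s,E^u$. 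This is a theorem about the pairing induced by $\omega$, not a consequence of conventions or genericity. Second, Franks' lemma (symplectic or not) does not \emph{produce} periodic orbits: it only realizes bounded perturbations of the derivative along orbits you already have. To obtain a periodic orbit whose $2m$ center eigenvalues can be pushed onto the unit circle by a $C^1$-small perturbation, you need the Ma\~n\'e/ergodic-closing-lemma mechanism, or the transfer argument of Bonatti--D\'iaz--Pujals and Saghin--Xia exploiting the absence of center domination, which supplies long periodic orbits whose center eigenvalue moduli are already subexponentially close to $1$ before any Franks-type surgery is applied. That quantitative step --- showing that lack of domination lets you mix the center directions and equalize all $2m$ moduli \emph{simultaneously} while staying symplectic --- is precisely the content of \cite{CH}; you correctly identify it as ``the delicate heart'' of the argument, but flagging it is not supplying it, and without it the proof does not close.
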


\vspace{0,2cm}
{\it Proof of Theorem \ref{symp.sett.}: } Let we consider the open and dense subset $\mathcal{A}$ inside the partially hyperbolic symplectic diffeomorphisms given by Theorem \ref{t.trichotomy}. Hence, given $m,n\in \N$, by Proposition \ref{prop.symplectic} we have that $\mathcal{B}^s(1/m,1/n)$ is dense in $\mathcal{A}$. Since these sets are open in $\diff^1_{\omega}(M)$ we have that $\mathcal{R}^s=(\cap \mathcal{B}^s(1/m,1/n)\cap \mathcal{A})\cup (cl(\mathcal{A}))^c$ is a residual subset inside  $\diff^1_{\omega}(M)$ such that every partially hyperbolic diffeomorphism $f\in \mathcal{R}^s$  is $ms$-minimal. 

Considering the maps $f^{-1}$, we can also find a residual subset $\mathcal{R}^u\subset \diff^1_{\omega}(M)$ such that every partially hyperbolic diffeomorphism $f\in \mathcal{R}^u$  is $mu$-minimal. 

Thus the proof is finished taking $\mathcal{R}=\mathcal{R}^s\cap \mathcal{R}^u$.  
$\hfill\square$

--------------------------------------------------------------------------

\end{document}